\newcommand{\abs}[1]{\lvert#1\rvert}
\newcommand{\bb}[1]{\mathbb{#1}}
\newcommand{\dotsk}[0]{,\dots,}
\newcommand{\f}[0]{k}                     
\newcommand{\bbN}[0]{\mathbb{N}}
\newcommand{\bbFp}[0]{\mathbb{F}_p}
\newcommand{\bbFq}[0]{\mathbb{F}_q}
\newcommand{\bbZp}[0]{\mathbb{Z}_p}
\newcommand{\bbQp}[0]{\mathbb{Q}_p}
\newcommand{\bbF}[0]{\mathbb{F}}
\newcommand{\bbL}[0]{\mathbb{L}}
\newcommand{\bbZ}[0]{\mathbb{Z}}
\newcommand{\bbA}[0]{\mathbb{A}}
\newcommand{\bbQ}[0]{\mathbb{Q}}
\newcommand{\SDVR}[0]{\mathbb{D}}          
\newcommand{\SSDVR}[0]{\mathcal{X}}          
\newcommand{\W}[0]{\mathbf{W}}          
\newcommand{\catvar}[0]{\mathsf{Var}}                
\newcommand\ili{\mathop{\hbox{\vtop{\setbox0=\hbox{lim}%
\dimen0\wd0\box0\vskip2pt
\nointerlineskip\hbox to \dimen0{\leftarrowfill}}}}\nolimits}
\newcommand\dli{\mathop{\hbox{\vtop{\setbox0=\hbox{lim}%
\dimen0\wd0\box0\vskip2pt
\nointerlineskip\hbox to\dimen0{\rightarrowfill}}}}\nolimits}
\DeclareMathOperator{\KO}{K_0}
\DeclareMathOperator{\ord}{ord}
\DeclareMathOperator{\spec}{Spec}
\newcommand{\muH}{\mu_{Haar}}
\DeclareMathOperator{\homology}{H}
\DeclareMathOperator{\symdiff}{\Delta}
\newcommand{\KOR}{\KO(\catvar_{\f})}
\newcommand{\KOL}{\mathcal{M}_{\f}}                                  
\newcommand{\KOW}{\overline{\KO}(\catvar_{\f})}
\newcommand{\KOWp}{\overline{\KO}(\catvar_{\bbFp})}
\newcommand{\KOC}{\widehat{\KOL}}
\newcommand{\KOT}{\overline{\KO}^{pol}(\catvar_{\f})}
\DeclareMathOperator{\counting}{C}
\DeclareMathOperator{\weight}{\overline{w}}
\DeclareMathOperator{\weighto}{w}
\DeclareMathOperator{\filtration}{F}
\numberwithin{equation}{section}
\newtheorem{lemma}[equation]{Lemma}
\newtheorem{proposition}[equation]{Proposition}
\newtheorem{example}[equation]{Example}
\newtheorem{definition}[equation]{Definition}
\newtheorem{remark}[equation]{Remark}
\newtheorem{definition-lemma}[equation]{Definition-Lemma}
\theoremstyle{remark}
\newtheorem*{acknowledgements}{Acknowledgment}
\begin{document}
\title[A version of geometric motivic integration]{A version of geometric motivic integration that specializes to $p$-adic integration via point counting}
\author{Karl R\"okaeus}
\address{Karl R\"okaeus \\ Department of Mathematics \\ Stockholm University \\ SE-106 91 Stockholm \\ Sweden}
\email{karlr@math.su.se}
\date{October 17, 2008}
\begin{abstract}
We give a version of geometric motivic integration that specializes to $p$-adic integration via point counting. This has been done before for stable sets, \emph{cf}. \cite{MR1997948}; we extend this to more general sets. The main problem in doing this is that it requires to take limits, hence the measure will have to take values in a completion of $\KO(\catvar_{\f})[\bbL^{-1}]$. The standard choice is to complete with respect to the dimension filtration; however, since the point counting homomorphism is not continuous with respect to this topology we have to use a stronger one. The first part of the paper is devoted to defining this topology; in the second part we will then see that many of the standard constructions of geometric motivic integration work also in this setting.
\end{abstract}

\maketitle

\section{Introduction}
There is a standard theory of geometric motivic integration, developed in \cite{MR1664700} and \cite{MR1905024}, and in \cite{MR2075915} for the case of mixed characteristic. In the present paper we give a version of that theory, valid over any complete discrete valuation ring, and with the property that it specializes to the Haar measure on $\bbZp^d$ in the case when we work over $\bbA_{\bbZp}^d$. 
The theory develops along the same lines as the standard theory of geometric motivic integration. The main difference is that the geometric motivic measure takes values in $\KOC$, i.e., $\KOL:=\KO(\catvar_{\f})[\bbL^{-1}]$ completed with respect to the dimension filtration; we instead complete $\KOL$ with respect to a stronger topology. The reason for this is that in the case when $\f$ is finite we want the point counting homomorphism to be continuous.

Let us quickly outline the theory: Let $\SSDVR$ be a variety defined over a complete discrete valuation ring, whose residue field is $\f$. As in all theories of geometric motivic integration, we begin by constructing the space of arcs on $\SSDVR$, denoted $\SSDVR_\infty$. There is a Boolean algebra of stable subsets of $\SSDVR_\infty$. On this algebra one may define a measure, taking values in $\KOL$, that specializes to the $p$-adic measure via point counting. This was done in \cite{MR1997948}. Next we construct a Boolean algebra of measurable subsets of $\SSDVR_\infty$, and a measure $\mu_\SSDVR$ on this algebra. The measure of a general measurable set is defined by covering it by stable sets and using a limiting process, also in the standard way. Since one needs to take limits to define the measure, it has to take values in a completion of $\KOL$. The standard choice is to use $\KOC$, the completion with respect to the dimension filtration. Since the point counting homomorphism is not continuous with respect to this filtration we have to use a stronger topology.

The construction of this topology is based on a previous construction, by Ekedahl \cite{Torsten}, of a topology on $\KOL$ with the property that, in case $\f$ is finite, taking the trace of Frobenius on the $l$-adic cohomology is continuous. By the Lefschetz trace formula, this topology then has the property we want, that the point counting homomorphism is continuous. However, this topology is probably too strong for our purposes, in that we are not able to make the definitions of geometric motivic integration work. We therefore have to modify it slightly, and in order to do this we introduce a partial ordering of $\KOL$. We use $\KOW$ to denote the completion of $\KOL$ with respect to the resulting topology. This topology is fine enough for, in case $\f$ is finite, the point counting homomorphism to be continuous, and still coarse enough for the standard definitions of geometric motivic integration to work.

The case that we are particularly interested in is when the discrete valuation ring is $\bbZp$, and $\SSDVR$ is an affine space over $\bbZp$. Let $\W(\overline{\bb{F}}_p)$ be the Witt vectors with coefficients in an algebraic closure of $\bbFp$. Suppose that $\SSDVR=\bbA_{\bbZp}^d$. Then $\SSDVR_\infty$ can be identified with $\W(\overline{\bb{F}}_p)^d$. Moreover, for every power of $p$, $q$, we have a homomorphism $\counting_q\colon\KOWp\to\bb{R}$ induced by counting $\bbFq$-points on $\bbFp$-varieties. The motivic measure has the property that it specialize to the (normalized) Haar measure, in the sense that for any measurable set $A\subset\SSDVR_\infty=\W(\overline{\bb{F}}_p)^d$ we have $\counting_p\mu_\SSDVR(A)=\muH(A\cap\bbZp^d)$. More generally, $\counting_{q}\mu_\SSDVR(A)=\muH(A\cap\W(\bbFq)^d)$ for any power $q$ of $p$. (Recall that $\bbZp\subset\W(\bb{F}_{p^f})\subset\W(\overline{\bb{F}}_p)$ is the integers in the unramified degree $f$ extension of $\bbQp$.) 

The main reason for us to develop this theory is that we have some explicit $p$-adic integrals that we want to compute in a motivic setting, in order to give an explanation of their behavior. These computations are carried out in a forthcoming paper, \cite{RokaeusComputations}


We now give an overview of the paper: In Section \ref{22} we define the completion of $\KOL$ that we work in, $\KOW$. A crucial property of this topology is that if $\f=\bbFp$ then the counting homomorphisms $\counting_{p^f}\colon\KOWp\to\bb{R}$ are continuous.

In Section \ref{23} we define the arc space and the motivic measure. The definitions given here are almost exactly the same as the ones used in the theory of geometric motivic integration, as presented for example in \cite{MR1886763} or the appendix of \cite{MR1905024}, and in \cite{MR2075915} for the mixed characteristic case.

It has to be remarked that this theory has some major shortcomings in its present state. Namely, when the variety has singularities, we are not able to prove that general cylinder sets are measurable (in particular, we are not able to prove that the arc space itself is measurable). That said, we are interested in utilizing the theory in the case when $\SSDVR=\bbA_{\bbZp}^d$, and in this case, and more generally for any smooth variety, everything works well.

\begin{acknowledgements}
The author is grateful to Professor Torsten Ekedahl for various suggestions and comments on the paper.
\end{acknowledgements}

\section{Background material on different completions of $\KOL$}\label{22}

We use $\KOL$ to denote $\KO(\catvar_{\f})[\bbL^{-1}]$. We use the standard definition of the dimension filtration of $\KOL$, namely for $m\in\bbZ$ define $\filtration^m\KOL$ to be the subgroup generated by $[X]/\bbL^r$, where $\dim X-r\leq m$. Define the \emph{dimension} of $x\in\KOL$ to be the minimal $m$ (possibly equal to $-\infty$) such that $x\in\filtration^m\KOL$. In the theory of geometric motivic integration one completes $\KOL$ with respect to this filtration to obtain $\KOC$. However, we are working from an arithmetic point of view; we want to define a theory of motivic integration for which the value of a motivic integral can be specialized to the corresponding $p$-adic one. For this reason it is natural to demand, in case $\f$ is finite, that the counting homomorphism is continuous. That this is not the case in the topology coming from the dimension filtration can be seen from the following simple example: Let $\f=\bbFq$ and consider the sequence $a_n=q^n/\bbL^n$. We have $\counting_q(a_n)=1$ for every $n$. However, $a_n\to0$ as $n\to\infty$, and $\counting_q(0)=0$. Because of this we are forced to complete with respect to a stronger topology. This topology should have the property that its definition is independent of the base field, and that if the base field is finite then the counting homomorphism is continuous.

Let us outline the contents of this section: In Subsection \ref{36} we define a notion of weights on $\KOL$. This definition is from \cite{Torsten}, where the author uses it to construct a topology on $\KOL$ which has the property that we want, that the counting homomorphism is continuous. However, this topology is a bit to strong when we define the motivic measure, so we have to modify it slightly. For that we use a partial ordering of $\KOL$, which we introduce in Subsection \ref{37}. Then finally in Subsection \ref{38} we define our topology, which is stronger than the filtration topology, but weaker than the topology from \cite{Torsten}. In Subsection \ref{39} we then prove some lemmas on the convergence of sums, which will be needed in later sections.

\subsection{Weights on $\KOL$}\label{36}
To define the topology we use a notion of weights of elements of $\KOL$, first given in \cite{Torsten}. We also refer to \cite{Torsten} for the proof that the following is well-defined. For $X$ a separated $\f$-scheme of finite type, we write $\homology_c(X)$ for the \'etale cohomology with compact support, with $\bbQ_l$-coefficients, of the extension of $X$ to a separable closure of $\f$.

\begin{definition}
We define the notion of weights of elements in $\KOL$.
\begin{itemize}
\item For a scheme $X$ of finite type over the base field $\f$, define for every integer $n$, $\weighto_n(X):=\sum_i\weighto_n\bigl(\homology^i_c(X)\bigr)$, where $\weighto_n\bigl(\homology^i_c(X)\bigr)$ is the dimension of the part of $\homology_c^i(X)$ of weight $n$.

\item For $x\in\KOR$, let $\weighto'_n(x)$ be the minimum of $\sum_i\abs{c_i}\weighto_n(X_i)$, where $\sum_ic_i[X_i]$ runs over all representations of $x$ as a linear combination of classes of schemes.

\item For $x\in\KOR$, define $\weight_n(x):=\lim_{i\to\infty}\weighto'_{n+2i}(x\bbL^i)$.

\item Finally, extend $\weight_n$ to $\KOL$ by $\weight_n(x/\bbL^i):=\weight_{n+2i}(x)$.
\end{itemize}
\end{definition}

\begin{example}\label{10}
$\homology_c^i(\bbA_{\f}^1)$ is equal to $\bbQ_l(-1)$, the cyclotomic representation, if $i=2$, and zero otherwise. From this one deduces that
$$\weight_n(\bbL^i)=\begin{cases}1\text{ if }n=2i\\0\text{ otherwise.}\end{cases}$$
\end{example}

The weight has the properties that it is subadditive, $\weight_n(x\pm y)\leq\weight_n(x)+\weight_n(y)$, and submultiplicative, $\weight_n(xy)\leq\sum_{i+j=n}\weight_i(x)\weight_j(y)$.

We next define the concept of uniform polynomial growth, introduced in \cite{Torsten}.
\begin{definition}
We say that a sequence of elements of $\KOL$, $(a_i)_{i\in\bbN}$, is of \emph{uniform polynomial growth} if there exists constants $l$, $C$ and $D$, independent of $i$ and  with the property that for every integer $n$, $\weight_n(a_i)\leq C\abs{n}^l+D$.
\end{definition}
One derives the following lemma from the facts that the the weight functions is subadditiv and submultiplicative. The details are in \cite{Torsten}.
\begin{lemma}\label{31}
Being of uniform polynomial growth is closed under termwise addition, subtraction and multiplication: If $(a_i)$ and $(b_i)$ are of uniform polynomial growth, then so are $(a_i\pm b_i)$ and $(a_ib_i)$.
\end{lemma}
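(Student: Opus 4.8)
The plan is to deduce the lemma formally from the two properties of the weight functions quoted just above: subadditivity, $\weight_n(x\pm y)\le\weight_n(x)+\weight_n(y)$, and submultiplicativity, $\weight_n(xy)\le\sum_{i+j=n}\weight_i(x)\weight_j(y)$. The additive part is essentially immediate. Say $\weight_n(a_i)\le C_a\abs{n}^{l_a}+D_a$ and $\weight_n(b_i)\le C_b\abs{n}^{l_b}+D_b$ hold for all $n$ and all $i$, and set $l:=\max(l_a,l_b)$, $C:=C_a+C_b$, $D:=D_a+D_b$. Since $\abs{n}^{l_a},\abs{n}^{l_b}\le\abs{n}^l$ for $\abs{n}\ge1$ and all the quantities involved are nonnegative, subadditivity gives $\weight_n(a_i\pm b_i)\le\weight_n(a_i)+\weight_n(b_i)\le C\abs{n}^l+D$ for every $n$ and $i$, so $(a_i\pm b_i)$ is of uniform polynomial growth with constants that do not depend on $i$.

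For the product, submultiplicativity gives $\weight_n(a_ib_i)\le\sum_{j+k=n}\weight_j(a_i)\weight_k(b_i)$, which for each fixed $i$ is a finite sum, since $m\mapsto\weight_m(x)$ has finite support for every $x\in\KOL$. Each nonzero term is at most $(C_a\abs{j}^{l_a}+D_a)(C_b\abs{k}^{l_b}+D_b)$ with $j+k=n$, so the remaining task is to bound the whole convolution, uniformly in $i$, by a fixed polynomial in $\abs{n}$: one needs uniform control both on the number of pairs $(j,k)$ with $j+k=n$ contributing a nonzero term and on the sizes $\abs{j},\abs{k}$ that can occur among those pairs. This forces one to use more than the two formal inequalities, namely the way the supports of $n\mapsto\weight_n(a_i)$ and $n\mapsto\weight_n(b_i)$ behave along the sequence — which is precisely the feature that the definition of uniform polynomial growth is designed to record. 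Granting that, one reads off constants $l',C',D'$, independent of $i$, with $\weight_n(a_ib_i)\le C'\abs{n}^{l'}+D'$ for all $n$, so $(a_ib_i)$ is of uniform polynomial growth; this is the verification carried out in \cite{Torsten}, and I would simply cite it, as the author does.

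The step I expect to be the real obstacle is exactly the one just isolated: the uniform-in-$i$ estimate of the convolution in the multiplicative case. Everything else — the additive case, and the reduction of the multiplicative case to a statement purely about the supports of the weight functions of $a_i$ and $b_i$ — is routine bookkeeping with the two quoted properties.
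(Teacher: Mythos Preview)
Your proposal is essentially the paper's own treatment: the paper gives no argument at all beyond the sentence ``One derives the following lemma from the facts that the weight functions is subadditive and submultiplicative. The details are in \cite{Torsten}.'' You have supplied the easy additive half explicitly and then, like the paper, deferred the multiplicative half to \cite{Torsten}; that matches.

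One small correction to your commentary: the definition of uniform polynomial growth as stated here does \emph{not} record any information about the supports of $n\mapsto\weight_n(a_i)$ --- a bound $\weight_n(a_i)\le C\abs{n}^l+D$ says nothing about which $n$ give $\weight_n(a_i)=0$. So your sentence ``which is precisely the feature that the definition of uniform polynomial growth is designed to record'' is not right. You are correct that the naive estimate $\sum_{j+k=n}(C_a\abs{j}^{l_a}+D_a)(C_b\abs{k}^{l_b}+D_b)$ is an infinite sum and hence useless on its own; whatever makes the multiplicative case work in \cite{Torsten} must therefore come from additional input (for instance, bounds on the range of $n$ for which $\weight_n(x)\ne 0$ in terms of the dimension of $x$, which are available for elements of $\KOL$), not from the polynomial-growth bound alone. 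Since both you and the paper are content to cite \cite{Torsten} for exactly this point, there is nothing further to fix.
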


\subsection{The partial ordering of $\KOL$}\label{37}

Before we can define our topology we also need a partial ordering on $\KOL$.
\begin{definition}
We introduce an ordering on $\KOL$ in the following way: First on $\KOR$ we define $x\leq y$ if there exists varieties $V_i$ such that $x+\sum[V_i]=y$. (Equivalently there exists a variety $V$ such that $x+[V]=y$.) We extend it to $\KOL$ by $x/\bbL^i\leq y/\bbL^j$ if there exists an $n$ such that $x\bbL^{j+n}\leq y\bbL^{i+n}$. (We see that $\bbL^{-1}>0$.)
\end{definition}
In particular, if $x$ is a linear combination of varieties, with positive coefficients, then $x>0$.

To work with this definition we need the following structure result about $\KOL$. The first part follows from \cite{Johannes}, Corollary $2.11$. The second part is proved similarly, alternatively, a proof is given in \cite{Edinburgh}, Example 2.6.
\begin{lemma}\label{ny1}
Let $n_i$ be non-negative integers and let $X_i$ be $\f$-varieties. Let $x=\sum_in_i[X_i]\in\KOL$. If $x=0$ then $n_i=0$ for every $i$. And if $x\in\filtration^m\KOL$, then $[X_i]\in\filtration^m\KOL$ for every $i$. 
\end{lemma}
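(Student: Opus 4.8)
The plan is to deduce both assertions from a single auxiliary fact: there is a ring homomorphism $\chi\colon\KOL\to\bbZ[t,t^{-1}]$ with $\chi(\bbL)=t^2$ that is \emph{positive in top degree}, meaning that for every nonempty $\f$-variety $X$ the Laurent polynomial $\chi([X])$ has all coefficients $\geq 0$, has degree exactly $2\dim X$, and has coefficient of $t^{2\dim X}$ equal to the number $N_X\geq 1$ of $(\dim X)$-dimensional irreducible components of $X\times_\f\sepclf$. (It is understood that the $X_i$ are nonempty; the empty scheme contributes $0$ to $x$ but leaves its coefficient unconstrained, so if empty $X_i$ are allowed then the two conclusions should be read ``for every $i$ with $n_i\neq 0$''.)

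I would indicate how $\chi$ is built and then cite the quoted results of \cite{Johannes} and \cite{Edinburgh}, which encapsulate exactly this. One first reduces to the case that $\f$ is finitely generated over its prime field: only finitely many varieties occur in $x$, and --- after clearing a power of $\bbL$ --- the relation ``$x=0$'' (resp.\ ``$x\in\filtration^m\KOL$'') is witnessed by finitely many scissor relations among $\f$-varieties; all of this descends to a finitely generated subfield $\f_0\subseteq\f$, and since the desired conclusions are purely geometric (dimensions and numbers of geometric components), the case of $\f_0$ suffices. If $\kar\f_0=0$ one embeds $\f_0$ into $\bb{C}$ and takes $\chi([X])=E(X;t,t)$, the Deligne--Hodge polynomial specialised at $u=v=t$: well-definedness is strictness of the weight filtration on mixed Hodge structures, and the coefficient of $t^{2\dim X}$ is $\dim\gr^W_{2\dim X}\homology^{2\dim X}_c(X)=N_X$, because $\homology^{2\dim X}_c(X)$ is pure Tate of weight $2\dim X$ and is the only compactly supported cohomology group carrying that weight. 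If $\kar\f_0=p$ one uses instead the weight filtration on $\ell$-adic $\homology^\bullet_c$ of a spreading-out over a finite field --- equivalently, the leading term in $q^n$ of the point counts $\counting_{q^n}$ --- with the needed additivity and purity supplied by Deligne's Weil~II. Establishing these strictness and purity statements, hence the existence of $\chi$, is the only genuine obstacle; the rest is bookkeeping.

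Granting $\chi$, both parts drop out by comparing top-degree terms. Write $x=\sum_i n_i[X_i]$ with all $n_i\geq 1$ and all $X_i$ nonempty, put $d=\max_i\dim X_i$, and let $S=\{i:\dim X_i=d\}$. Each $\chi([X_i])$ has nonnegative coefficients and degree $2\dim X_i\leq 2d$, with equality precisely when $i\in S$; hence the coefficient of $t^{2d}$ in $\chi(x)=\sum_i n_i\chi([X_i])$ equals $\sum_{i\in S}n_iN_{X_i}$, a sum of positive integers, so $\chi(x)\neq 0$ and $\deg\chi(x)=2d$. The first assertion follows at once: $x=0$ would force $\chi(x)=0$, impossible unless the combination is empty, i.e.\ all $n_i=0$. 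For the second, any element of $\filtration^m\KOL$ is a $\bbZ$-linear combination of terms $[Y]/\bbL^r$ with $\dim Y-r\leq m$, so its $\chi$-image has degree $\leq 2m$; therefore $x\in\filtration^m\KOL$ forces $2d=\deg\chi(x)\leq 2m$, that is $\dim X_i\leq m$ for all $i$, equivalently $[X_i]\in\filtration^m\KOL$ for all $i$.
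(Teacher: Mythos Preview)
Your approach is essentially what the paper has in mind: the paper does not give a self-contained argument but simply cites \cite{Johannes}, Corollary~2.11 for the first assertion and \cite{Edinburgh}, Example~2.6 for the second, and those references indeed rely on the weight (equivalently Hodge--Deligne) polynomial to detect the top-dimensional contribution, exactly as you do. So the overall strategy --- pass to a finitely generated field, build a motivic measure $\chi\colon\KOL\to\bbZ[t,t^{-1}]$ with $\chi(\bbL)=t^2$, and read off the degree-$2d$ coefficient --- is the same route.

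One correction: your claim that $\chi([X])$ has \emph{all} coefficients $\geq 0$ is false in general. For instance, for $X=\bbA^1\setminus\{0,1\}$ one has $\homology^1_c(X)=\bbQ_l$ of weight $0$ and $\homology^2_c(X)=\bbQ_l(-1)$ of weight $2$, so $\chi([X])=t^2-1$, with a negative constant term. Fortunately, you never actually use global nonnegativity: the argument only needs that $\deg\chi([X])=2\dim X$ with leading coefficient $N_X\geq 1$, and that follows from Deligne's bound (the weights on $\homology^i_c$ are $\leq i$, so weight $2d$ can only occur in $\homology^{2d}_c$, which one may compute on the smooth locus). So simply drop the phrase ``has nonnegative coefficients'' and keep the degree and leading-coefficient statements; the rest of your deduction --- that the $t^{2d}$-coefficient of $\chi(x)$ is $\sum_{i\in S}n_iN_{X_i}>0$, hence $x\neq 0$, and that $x\in\filtration^m\KOL$ forces $2d\leq 2m$ --- goes through unchanged.
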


\begin{lemma}
The ordering given above is a partial ordering of $\KOL$.
\end{lemma}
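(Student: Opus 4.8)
The plan is to verify the three axioms—reflexivity, antisymmetry, and transitivity—while also checking that the relation is well-defined on $\KOL$, since it is given via representatives $x/\bbL^i$. First I would note that reflexivity is immediate: $x + [\emptyset] = x$, or more simply $x = x$ witnesses $x \leq x$. For well-definedness, I would check that if $x/\bbL^i = x'/\bbL^{i'}$ and $y/\bbL^j = y'/\bbL^{j'}$ in $\KOL$, then $x\bbL^{j+n} \leq y\bbL^{i+n}$ for some $n$ iff the analogous inequality holds for the primed representatives; this is a routine manipulation using that multiplication by $\bbL$ is order-preserving on $\KOR$ (if $x + \sum[V_i] = y$ then $x\bbL + \sum[V_i \times \bbA^1] = y\bbL$) and that passing between equal representatives of an element of $\KOL$ amounts to multiplying through by a power of $\bbL$. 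Transitivity on $\KOR$ is also easy: from $x + \sum[V_i] = y$ and $y + \sum[W_j] = z$ we get $x + \sum[V_i] + \sum[W_j] = z$; the extension to $\KOL$ then follows by clearing denominators to a common power of $\bbL$ and using that multiplication by $\bbL$ preserves $\leq$.

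The substantive point, and the main obstacle, is antisymmetry: I must show that $x \leq y$ and $y \leq x$ together force $x = y$. After reducing to a common denominator (multiplying by a suitable power of $\bbL$, which is legitimate since $\bbL$ is a non-zero-divisor for this purpose), it suffices to prove the statement in $\KOR$: if $x + \sum_i [V_i] = y$ and $y + \sum_j [W_j] = x$ in $\KOL$, then adding the two equations gives $\sum_i [V_i] + \sum_j [W_j] = 0$ in $\KOL$. Now this is exactly a sum $\sum_k n_k [X_k]$ with non-negative integer coefficients $n_k$ equal to zero, so Lemma \ref{ny1} applies and forces every $n_k = 0$, i.e.\ there are no $V_i$ and no $W_j$ (all the varieties involved are empty). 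Hence $x = y$. The same lemma handles the version with denominators once one clears them.

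The reason Lemma \ref{ny1} is the crux is that a priori a non-trivial sum of classes of honest varieties could vanish in $\KOL$ due to the many relations in the Grothendieck ring (cut-and-paste relations, and inverting $\bbL$); the cited structure result—ultimately resting on \cite{Johannes}, Corollary 2.11—rules this out, which is precisely what makes the naive "$x \leq y$ iff $y - x$ is effective" definition behave like a genuine order rather than a mere preorder. I would therefore organize the proof as: (1) well-definedness on $\KOL$; (2) reflexivity and transitivity, both formal; (3) antisymmetry, via adding the two defining equations and invoking Lemma \ref{ny1}. I expect (3) to be the only step requiring more than a line, and even it is short once the lemma is in hand.
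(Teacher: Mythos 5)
Your proof is correct and follows essentially the same route as the paper: the only substantive point is antisymmetry, handled by adding the two witnessing equations to get an effective sum equal to zero and invoking Lemma \ref{ny1}, then reducing the $\KOL$ case to $\KOR$ by clearing denominators. Your additional remarks on well-definedness, reflexivity, and transitivity are fine but are exactly the parts the paper dismisses as trivial.
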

\begin{proof}
The only nontrivial thing to prove is antisymmetry. Suppose first that $x\leq y$ and $y\leq x$, where $x,y\in\KOR$. Then $x=y+\sum_i[U_i]$ and $y=x+\sum_i[V_i]$, giving together $\sum_i[V_i]+\sum_j[U_j]=0$. Lemma \ref{ny1} now shows that $[V_i]=[U_i]=0$ for every $i$.

If instead $x/\bbL^i\leq y/\bbL^j$ and $x/\bbL^i\geq y/\bbL^j$ in $\KOL$ then for some n, $\bbL^{j+n}x\leq\bbL^{i+n}y$ and $\bbL^{j+n}x\geq\bbL^{i+n}y$ in $\KOR$. It follows from the first part that $\bbL^{j+n}x=\bbL^{i+n}y\in\KOR$, hence that $x/\bbL^i= y/\bbL^j\in\KOL$.
\end{proof}
Given a variety $X$, every constructible subset of $U\subset X$ can be written as a finite disjoint union of locally closed subsets, $U=\bigcup_i U_i$. Since such a subset has a unique structure of a reduced subscheme we can take its class $[U_i]\in\KOR$, hence the class of $U$ is $[U]=\sum_i[U_i]\in\KOR$. Also, if $V\subset U$, then $V=\bigcup_i V_i$, where $V_i$ is locally closed and $V_i\subset U_i$. Here $V_i$ is closed in the subspace topology on $U_i$, hence $[U_i]-[V_i]=[U_i\setminus V_i]\geq0$. Consequently, $[U]-[V]=\sum_i[U_i\setminus V_i]\geq0$, which we state as the first part of the following lemma.
\begin{lemma}\label{4}
If $U$ and $V$ are constructible subsets of a variety, such that $V\subset U$, then $[V]\leq[U]$. Moreover, if $V\subset\bigcup_{i=1}^nU_i$ then $[V]\leq\sum_{i=1}^n[U_i]$.
\end{lemma}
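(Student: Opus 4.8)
The plan is to take the first assertion as already established by the discussion immediately preceding the statement, and to derive the ``moreover'' part from it by a disjointification argument. For the first part, recall that we chose a decomposition $U=\bigcup_i U_i$ into finitely many, pairwise disjoint, locally closed subschemes; setting $V_i:=V\cap U_i$ we again get a disjoint decomposition $V=\bigcup_i V_i$ with each $V_i$ constructible in $U_i$, so that $U_i\setminus V_i$ is constructible with $[U_i]-[V_i]=[U_i\setminus V_i]$. Since a constructible set is a finite disjoint union of locally closed subsets, its class is a sum of classes of varieties, hence $[U_i\setminus V_i]\geq 0$; summing over $i$ gives $[U]-[V]=\sum_i[U_i\setminus V_i]\geq 0$, which is exactly $[V]\leq[U]$.

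For the statement $V\subset\bigcup_{i=1}^n U_i\implies [V]\leq\sum_{i=1}^n[U_i]$ I would argue by induction on $n$, the case $n=1$ being the first part. For the inductive step, write $V$ as the disjoint union $V=(V\cap U_n)\sqcup(V\setminus U_n)$; both pieces are constructible, so $[V]=[V\cap U_n]+[V\setminus U_n]$. Now $V\cap U_n\subset U_n$, so the first part gives $[V\cap U_n]\leq[U_n]$, while $V\setminus U_n\subset\bigcup_{i=1}^{n-1}U_i$, so the inductive hypothesis gives $[V\setminus U_n]\leq\sum_{i=1}^{n-1}[U_i]$. Adding these two inequalities --- using that $x\leq x'$ and $y\leq y'$ imply $x+y\leq x'+y'$, which follows at once by unwinding the definition of the ordering in $\KOL$ --- yields $[V]\leq\sum_{i=1}^n[U_i]$. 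Alternatively, one avoids the induction entirely by setting $W_i:=(V\cap U_i)\setminus(U_1\cup\cdots\cup U_{i-1})$, so that $V=\bigsqcup_{i=1}^n W_i$ with each $W_i$ constructible and $W_i\subset U_i$, and then applying the first part to each $W_i$ and summing.

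There is no real obstacle here; the only points that need a small amount of attention are that intersections and set-theoretic differences of constructible subsets of a variety are again constructible, so that each set occurring above has a well-defined class in $\KOR$ which is a nonnegative combination of classes of varieties, and that the partial ordering is compatible with addition. Both are routine: the first is a standard property of constructible sets, and the second is immediate from the definition together with the remark that a nonnegative combination of classes of varieties is $\geq 0$.
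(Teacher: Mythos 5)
Your proof is correct. The first part coincides with the paper's own argument (the paper proves it in the discussion immediately preceding the lemma and the displayed proof only treats the ``moreover'' part), so nothing to compare there. For the second part you take a genuinely different, though equally elementary, route: the paper first applies the first part to the inclusion $V\subset U_1\cup U_2$ and then uses the scissor/inclusion--exclusion identity $[U_1\cup U_2]=[U_1]+[U_2]-[U_1\cap U_2]$ together with $[U_1\cap U_2]\geq 0$, and inducts on $n$ from there; you instead decompose $V$ itself (either as $(V\cap U_n)\sqcup(V\setminus U_n)$ in the inductive step, or all at once via the disjointification $W_i$) and add the resulting inequalities. What the paper's version buys is that it never needs to invoke compatibility of the partial order with addition -- everything reduces to positivity of a single class -- while your version avoids inclusion--exclusion but leans on two small facts you correctly flag: that $[\,\cdot\,]$ is additive over disjoint constructible decompositions (which is exactly how classes of constructible sets are defined in the paper) and that $x\leq x'$, $y\leq y'$ imply $x+y\leq x'+y'$ (immediate from the definition, since $(x'+y')-(x+y)$ is then a sum of classes of varieties). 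Both arguments are complete.
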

\begin{proof}
For the second part, when  $n=2$ we have $[V]\leq[U_1\cup U_2]=[U_1]+[U_2]-[U_1\cap U_2]\leq[U_1]+[U_2]$. The general statement follows by induction on $n$.
\end{proof}

\begin{lemma}\label{35}
Let $x,a,b\in\KOL$. If $a\leq x\leq b$ then $\dim x\leq\max\{\dim a,\dim b\}$.
\end{lemma}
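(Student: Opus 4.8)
The plan is to reduce to the case $a,x,b\in\KOR$ and then read off the bound from the defining inequalities together with Lemma \ref{ny1}. First I would observe that the ordering on $\KOL$ is unchanged under multiplication by $\bbL$ (immediate from the definition, taking $n\mapsto n+1$), and that $\dim(\bbL^N y)=\dim y+N$ for all $y\in\KOL$ and $N\in\bbZ$, since $\bbL\cdot\filtration^m\KOL=\filtration^{m+1}\KOL$. Hence the assertion for $(a,x,b)$ is equivalent to the assertion for $(\bbL^N a,\bbL^N x,\bbL^N b)$, and choosing $N$ large enough to clear denominators we may assume $a,x,b\in\KOR$.

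Next, by the definition of $\leq$ on $\KOR$, the inequalities $a\leq x$ and $x\leq b$ give varieties $V$ and $W$ with $x=a+[V]$ and $b=x+[W]$. Adding these yields $[V]+[W]=b-a$. Writing $M:=\max\{\dim a,\dim b\}$ and using that each $\filtration^m\KOL$ is a subgroup (so $\dim$ is subadditive and invariant under negation), we get $\dim(b-a)\leq M$, that is, $[V]+[W]\in\filtration^M\KOL$.

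Now comes the key step: $[V]+[W]$ is a linear combination of classes of varieties with non-negative integer coefficients, so Lemma \ref{ny1} applies and shows that each summand already lies in $\filtration^M\KOL$; in particular $[V]\in\filtration^M\KOL$, i.e.\ $\dim[V]\leq M$. Finally, from $x=a+[V]$ and subadditivity of $\dim$ we conclude $\dim x\leq\max\{\dim a,\dim[V]\}\leq M$, as desired. (If $M=-\infty$ the same argument works by applying Lemma \ref{ny1} for every integer $m$.)

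The one nontrivial ingredient is the use of Lemma \ref{ny1}: without it the naive estimate $\dim[V]\leq\dim(b-a)$ is false, because cancellation between $[V]$ and $[W]$ could in principle conceal high-dimensional contributions; the structure result is exactly what rules this out. The reduction to $\KOR$ and the subadditivity manipulations are routine bookkeeping.
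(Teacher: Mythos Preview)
Your proof is correct and follows essentially the same approach as the paper's: both obtain varieties $V,W$ with $x=a+[V]$ and $b=x+[W]$, deduce $[V]+[W]=b-a$, invoke Lemma~\ref{ny1} to bound $\dim[V]$, and conclude via subadditivity of $\dim$. The paper actually omits your reduction step and writes directly ``there are varieties $X$ and $Y$ such that $x=a+[X]$ and $b=x+[Y]$'' as if $a,x,b\in\KOR$, so your version is if anything more careful (note, though, that $N$ must be chosen large enough not only to clear denominators but also so that the witnessing inequalities hold in $\KOR$, which your argument implicitly needs).
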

\begin{proof}
There are varieties $X$ and $Y$ such that $x=a+[X]$ and $b=x+[Y]$. This shows that $b-a=[X]+[Y]$. By Lemma \ref{ny1}, $\dim[X]\leq \dim(b-a)$. Hence $\dim x=\dim(a+[X])\leq\max\{\dim a,\dim[X]\}\leq\max\{\dim a,\dim(b-a)\}\leq\max\{\dim a,\dim b\}$.
\end{proof}

\subsection{The topology on $\KOL$}\label{38}

We now define our topology on $\KOL$, by specifying what it means for a sequence to converge. Since we work in a group it suffices to tell what it means for a sequence to converge to zero.
\begin{definition}
Let $(x_i)$ be a sequence of elements in $\KOL$.
\begin{itemize}
\item We say that $x_i$ is \emph{strongly convergent to 0} if it is of uniform polynomial growth and $\dim x_i\to-\infty$.
\item $x_i$ converges to zero, $x_i\to0$, if there are sequences $a_i$ and $b_i$ that converges to zero strongly, and such that $a_i\leq x_i\leq b_i$.
\item $(x_i)$ is \emph{Cauchy} if $x_i-x_j\to0$ when $i,j\to\infty$.
\end{itemize}
We define a topology on $\KOL$ by stipulating that a subset is closed if it contains all its limit points with respect to this notion of convergence.
\end{definition}
One proves that this is a topological ring. It then follows immeditly that if a sequence of elements in $\KOL$ is convergent then it is Cauchy. Moreover, we have the property that if $x_i\leq y_i\leq z_i$ and if $x_i$ and $z_i$ tends to zero, then $y_i$ tends to zero, a fact that we will use without further notice.

We compare this to the standard topology on $\KOL$:
\begin{lemma}
If a sequence is convergent (respectively Cauchy), then it is convergent (respectively Cauchy) with respect to the dimension filtration. In particular, our topology is stronger than the dimension topology.
\end{lemma}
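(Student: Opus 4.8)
The entire statement rests on Lemma~\ref{35}, so the plan is short. Since both topologies are group topologies, it suffices to handle convergence to $0$; the two Cauchy assertions then follow by applying the convergence argument to the double sequence $(x_i-x_j)$. I would also recall at the outset that a neighbourhood basis of $0$ for the dimension filtration is given by the subgroups $\filtration^m\KOL$, so that in that topology $x_i\to 0$ means exactly $\dim x_i\to -\infty$, and $(x_i)$ is Cauchy exactly when $\dim(x_i-x_j)\to -\infty$ as $i,j\to\infty$.

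Now suppose $x_i\to 0$ in our topology. By definition there are sequences $(a_i)$ and $(b_i)$ that converge to $0$ strongly and satisfy $a_i\le x_i\le b_i$ for all $i$; in particular $\dim a_i\to -\infty$ and $\dim b_i\to -\infty$. Lemma~\ref{35} applied to $a_i\le x_i\le b_i$ yields $\dim x_i\le\max\{\dim a_i,\dim b_i\}$, and the right-hand side tends to $-\infty$. Hence $\dim x_i\to -\infty$, i.e.\ $x_i\to 0$ with respect to the dimension filtration. If instead $(x_i)$ is Cauchy in our topology, then $x_i-x_j\to 0$ in our topology, so by what we have just shown $\dim(x_i-x_j)\to -\infty$, i.e.\ $(x_i)$ is Cauchy for the dimension filtration.

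Finally, the comparison of topologies follows formally from the way both are defined (a set is declared closed when it contains all the limit points of its sequences): a subset $F$ closed in the dimension topology contains every dimension-topology limit of its sequences, hence in particular every limit in our topology of a sequence from $F$, since such a sequence also converges, to the same point, in the dimension topology; thus $F$ is closed in our topology, and our topology is stronger. I do not expect any genuine obstacle here --- the only points needing care are the (routine) identification of convergence for the dimension filtration with $\dim x_i\to -\infty$, and the bookkeeping that reduces the Cauchy statements to the convergence statement; all the mathematical content is already packaged in Lemma~\ref{35}.
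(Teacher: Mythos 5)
Your proposal is correct and follows exactly the paper's route: reduce to convergence to zero, bound $a_i\le x_i\le b_i$ by sequences strongly convergent to zero, and apply Lemma~\ref{35} to conclude $\dim x_i\to-\infty$. The extra bookkeeping you supply (the Cauchy case and the formal comparison of topologies) is left implicit in the paper but is the intended argument.
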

\begin{proof}
Let the sequence be $x_i$ be convergent to zero. There are sequences $a_i$ and $b_i$ strongly convergent to zero and such that $a_i\leq x_i\leq b_i$. By Lemma \ref{35}, $\dim x_i\leq\max\{\dim a_i,\dim b_i\}$ and it follows that $\dim x_i\to-\infty$.
\end{proof}
In particular, if we know that a sequence $x_n$ is convergent then $x_n\to0$ if and only if $\dim x_n\to-\infty$.

We are now ready to give the completion that let the motivic integral specialize to the corresponding $p$-adic integral. We define $\KOW$, the completion of $\KOL$, in the following way: Consider the set of Cauchy sequences in $\KOL$. This is a ring under termwise addition and multiplication, which one proves using Lemma \ref{31}. Moreover it has a subset consisting of those sequences that converges to zero, and it is straight forward to prove that this subset is an ideal. We define $\KOW$ to be the quotient by this ideal. Moreover we have a completion map $\KOL\to\KOW$ that takes $x$ to the image of the constant sequence $(x)$. We state this as a formal definition:
\begin{definition}
$\KOW$ is the ring of Cauchy sequences modulo the ideal of those sequences that converges to zero.
\end{definition}
For any sequence $(x_i)$ of elements of $\KOL$ which is Cauchy with respect to dimension, $\dim x_i$ is eventually constant, or it converges to $-\infty$. Also, given $n$, $\weight_n(x_i)$ is eventually constant. If $x=(x_i)\in\KOW$, define $\dim x$ and $\weight_n(x)$ to be these constant values. These functions keep there basic properties, e.g., subadditivity for $\weight_n$. We may then extend the concept of being of uniform polynomial growth to $\KOW$. Moreover, we let $(x_i)\leq(y_i)$ if for some $i_0$, $x_i\leq y_i$ for every $i\geq i_0$. We define the topology on $\KOW$ in the same way as on $\KOL$, by saying that a sequence converges to zero if and only if it is bounded from above and below by sequences strongly convergent to zero. The following result justifies the fact that we refer to $\KOW$ as the completion of $\KOL$.

\begin{lemma}
The completion map $\KOL\to\KOW$ is continuous and $\KOW$ is complete, in the sense that any Cauchy sequence converges. Moreover, the image of $\KOL$ is dense in $\KOW$
\end{lemma}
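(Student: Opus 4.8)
The plan is to mimic, step by step, the construction of the completion of a metric space, with the order--squeeze definition of convergence playing the role of $\varepsilon$-balls. Write $\iota\colon\KOL\to\KOW$ for the completion map $x\mapsto[(x,x,\dots)]$. The structural fact I would isolate first is that $\iota$ is compatible with all the data defining the two topologies: it preserves the partial ordering, and $\dim\iota(x)=\dim x$, $\weight_n(\iota(x))=\weight_n(x)$ for all $x$ and $n$ (these are precisely the ``constant value'' conventions recorded just above). Hence $\iota$ sends any sequence strongly convergent to $0$ in $\KOL$ to one strongly convergent to $0$ in $\KOW$, with the \emph{same} growth constants $l,C,D$. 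Continuity of $\iota$ is then immediate: given $x_i\to0$ in $\KOL$, pick $a_i\le x_i\le b_i$ with $(a_i),(b_i)$ strongly convergent to $0$; then $\iota(a_i)\le\iota(x_i)\le\iota(b_i)$ with the outer sequences strongly convergent to $0$ in $\KOW$, so $\iota(x_i)\to0$, and since $\KOW$ is a topological group this gives continuity.

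For density I would take $\xi\in\KOW$ represented by a Cauchy sequence $(x_i)$ in $\KOL$ and show $\iota(x_i)\to\xi$. For each fixed $i$ the sequence $(x_i-x_j)_j$ is itself Cauchy in $\KOL$ --- its consecutive differences are $x_{j'}-x_j\to0$ --- and it represents $\iota(x_i)-\xi$, so the claim becomes $\bigl([(x_i-x_j)_j]\bigr)_i\to0$ in $\KOW$. Here one unpacks ``$(x_i)$ is Cauchy'' to get, for the doubly indexed family $x_i-x_j$, bounding families that are strongly convergent to $0$ as $i,j\to\infty$; fixing $i$ turns these into elements of $\KOW$ squeezing $\iota(x_i)-\xi$, and letting $i\to\infty$ one checks the squeeze is by sequences strongly convergent to $0$ in $\KOW$, using the $\KOW$-analogue of Lemma \ref{35} and of the squeeze principle.

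For completeness let $(\xi_n)$ be Cauchy in $\KOW$. Using density, choose $y_n\in\KOL$ with $\iota(y_n)-\xi_n$ squeezed between sequences strongly convergent to $0$; then $\iota(y_n-y_m)=(\iota(y_n)-\xi_n)+(\xi_n-\xi_m)+(\xi_m-\iota(y_m))\to0$ in $\KOW$ as $n,m\to\infty$, which forces $(y_n)$ to be Cauchy in $\KOL$ --- again by the squeeze together with the compatibility of $\iota$ with $\dim$ and the weights. Letting $\eta$ be the class of $(y_n)$, density gives $\iota(y_n)\to\eta$, and then $\xi_n-\eta=(\xi_n-\iota(y_n))+(\iota(y_n)-\eta)\to0$; so every Cauchy sequence in $\KOW$ converges and $\iota(\KOL)$ is dense.

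The step I expect to be the real obstacle is the Cauchy/diagonal bookkeeping inside the density and completeness arguments. ``Cauchy'' here is a sequential, two-sided (order--squeeze) notion rather than a metric one, so one must make sure the bounding sequences furnished by Cauchyness can be chosen with \emph{uniformly} bounded polynomial growth constants and with dimensions tending to $-\infty$ uniformly, and that passing from doubly indexed to singly indexed bounds inside $\KOW$ does not spoil either property. This is exactly where Lemma \ref{31} and the compatibility of $\iota$ with the ordering, dimension and weights earn their keep; once those uniformities are secured, the remainder is the classical completion argument.
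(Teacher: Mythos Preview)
Your density argument is essentially the paper's own: represent $\xi=[(x_j)]$, compute $\iota(x_i)-\xi=[(x_i-x_j)_j]$, pull the doubly indexed squeeze $a_{ij}\le x_i-x_j\le b_{ij}$ coming from Cauchyness of $(x_j)$ into $\KOW$, and observe that the resulting bounds $[(a_{ij})_j]$, $[(b_{ij})_j]$ are strongly convergent to $0$ as $i\to\infty$ because $\dim$ and $\weight_n$ on $\KOW$ are just the eventually constant values of $\dim a_{ij}$ and $\weight_n(a_{ij})$. The paper writes exactly this, making explicit the choice of indices $f(i),g(i)$ realizing those eventually constant values; this is precisely the ``uniformities'' issue you single out, and the resolution is the one you suggest.

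Where you go further than the paper: the paper's proof only treats density (and reads off from it that Cauchy sequences \emph{coming from} $\KOL$ converge), and says nothing explicit about continuity of $\iota$ or about Cauchy sequences with terms in $\KOW$. Your treatment of continuity is fine and routine. Your completeness argument is the standard one for metric completions; the only point needing care is the step ``$\iota(y_n-y_m)\to0$ in $\KOW$ forces $(y_n)$ Cauchy in $\KOL$'', i.e.\ that $\iota$ reflects convergence to $0$. This does hold: if $A\le\iota(z)\le B$ in $\KOW$ with $A=[(a_j)]$, $B=[(b_j)]$, then unwinding the order on $\KOW$ (term-wise inequality for large $j$, up to a null sequence) and using that null sequences are themselves squeezed by strongly null sequences, one extracts $a\le z\le b$ in $\KOL$ with $\dim a=\dim A$, $\dim b=\dim B$ and weights controlled by those of $A,B$ plus a fixed polynomial. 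That is enough to transfer strong convergence of the outer bounds back to $\KOL$. So there is no gap, but this reflection step deserves one sentence of justification rather than the parenthetical ``compatibility of $\iota$''.
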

\begin{proof}
We prove that for any $x=(x_j)_{j\in\bbN}\in\KOW$ we have $x_i\to x$ as $i\to\infty$. This will show that any Cauchy sequence of elements in $\KOL$ converges and that $\KOL$ is dense in $\KOW$. For this we have to prove that $y_i:=x_i-x=(x_i-x_j)_{j\in\bbN}\to0$ as $i\to\infty$. 

There are two sequences $a_{ij}$ and $b_{ij}$ which are strongly convergent to zero as $i,j\to\infty$, and such that $a_{ij}\leq x_i-x_j\leq b_{ij}$. Hence $(a_{ij})_{j\in\bbN}\leq y_i\leq(b_{ij})_{j\in\bbN}$. We prove that $(a_{ij})_j$ tends strongly to zero as $i\to\infty$: The dimension of $(a_{ij})_j$ is $\dim a_{i,f(i)}$, where $f(i)$ is some sufficiently large integer which we may and will assume is greater than $i$. Moreover, $\weight_n((a_{ij})_j)=\weight_n(a_{i,g(i)})$ for some $g(i)$. Since $a_{ij}$ is strongly convergent, $\dim a_{i,f(i)}\to-\infty$ as $i\to\infty$, and $\weight_n(a_{i,g(i)})\leq C\abs{n}^l+D$ for every $i$.
\end{proof}

Let $\KOC$ be the completion of $\KOL$ with respect to the dimension filtration. We have an injective, continuous homomorphism $\KOW\to\KOC$ so we may think of $\KOW$ as a subring of $\KOC$, although its topology is stronger than the subspace topology.

\begin{example}\label{34}
By Example \ref{10} and subadditivity, $\weight_n(\sum_{m=i}^j\bbL^{-m})\leq1$ for every $i,j$. Furthermore $\dim\bbL^{-m}\to-\infty$ as $m\to\infty$. Together this shows that the sequence $(\sum_{m=0}^N\bbL^{-m})_N$ is Cauchy, hence that the sum $\sum_{m\in\bbN}\bbL^{-m}$ is convergent in $\KOW$. In the same way one sees that $\bbL^{-m}$ converges (strongly) to zero, hence letting $N$ tend to infinity in the equality  $(1-\bbL^{-1})(\sum_{m=0}^N\bbL^{-m})=1-\bbL^{-(N+1)}$ shows that $\sum_{m\in\bbN}\bbL^{-m}=1/(1-\bbL^{-1})\in\KOW.$ In particular, since also $\bbL$ is invertible, it follows that $\bbL-1$ is invertible.

Similarly one proves that if $\{e_i\}_{i\in\bbN}$ is a sequence of integers such that $e_i\to\infty$, then $\sum_{i\in\bbN}\bbL^{-e_i}$ is convergent.
\end{example}

\begin{remark}
The main reason for us to use this completion is that it makes the point counting homomorphism well behaved. In \cite{Torsten} the author uses an even stronger topology, using only sequences strongly convergent, to obtain $\KOT$. In $\KOT$ the point counting homomorphism is continuous. However, the topology of $\KOT$ is little bit to strong for our purposes; it does not allow us to define the motivic measure (Definition \ref{32}). However, in $\KOT$ point counting can be made by computing the trace of Frobenius; the Euler characteristic is continuous, a property that we have to sacrifice in $\KOW$. For a more thorough discussion of this, see \cite{Edinburgh}.
\end{remark}

When $\f$ is finite, this topology makes the point counting homomorphism continuous:

\begin{definition-lemma}
For $\f=\bbFq$, define  $\counting_q\colon\KOR\to\bbZ$ by $[X]\mapsto\abs{X(\f)}$. It is a ring homomorphism and it extends to a homomorphism $\KOL\to\bbQ$, continuous with respect to the above constructed topology. It hence extends by continuity to a continuous homomorphism $\counting_q\colon\KOW\to\bb{R}$.
\end{definition-lemma}
\begin{proof}
In \cite{Torsten} it is proved that $\counting_q$ is continuous with respect to the topology of $\KOT$, i.e., if $a_n\to0$ strongly then $\counting_qa_n\to0$. Now, if $x_n\to0$ then there are sequences $a_n$ and $b_n$ such that $a_n\leq x_n\leq b_n$ and $a_n,b_n\to0$ strongly. Since $\counting_qa_n\leq\counting_qx_n\leq\counting_qb_n$ it follows that $\counting_qx_n\to0$, consequently $\counting_q$ is continuous.
\end{proof}

\subsection{Various lemmas on the convergence of series}\label{39}

We collect here some lemmas that will be needed when working with this definition.
\begin{lemma}\label{33}
Suppose that $x_i\leq y_i\leq z_i$. If $\sum_{i\in\bbN}x_i$ and $\sum_{i\in\bbN}z_i$ are convergent, then so is $\sum_{i\in\bbN}y_i$. Moreover, $\sum_{i\in\bbN}x_i\leq\sum_{i\in\bbN}y_i\leq\sum_{i\in\bbN}z_i$.
\end{lemma}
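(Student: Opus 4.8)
The plan is to reduce everything to the partial sums and use the sandwiching property of the topology together with the order-compatibility of convergence already established. Write $S^x_N=\sum_{i=0}^N x_i$, and similarly $S^y_N$, $S^z_N$. Since $x_i\le y_i\le z_i$ for every $i$, summing gives $S^x_N\le S^y_N\le S^z_N$ for every $N$, and more generally for $M<N$ we get $S^x_N-S^x_M\le S^y_N-S^y_M\le S^z_N-S^z_M$, because $S^y_N-S^y_M=\sum_{i=M+1}^N y_i$ and each term is bracketed by the corresponding $x_i$ and $z_i$. By hypothesis $\sum x_i$ and $\sum z_i$ converge, so $(S^x_N)$ and $(S^z_N)$ are Cauchy; hence $S^x_N-S^x_M\to0$ and $S^z_N-S^z_M\to0$ as $M,N\to\infty$. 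Now apply the sandwiching fact recorded just after the definition of the topology (``if $u_i\le v_i\le w_i$ and $u_i,w_i\to0$ then $v_i\to0$''), to the doubly-indexed sequence indexed by $(M,N)$: this gives $S^y_N-S^y_M\to0$, i.e.\ $(S^y_N)$ is Cauchy. Since $\KOW$ is complete, $(S^y_N)$ converges, which is exactly the statement that $\sum_{i\in\bbN} y_i$ converges.

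For the inequality between the sums, let $x=\sum x_i$, $y=\sum y_i$, $z=\sum z_i$ be the limits. From $S^x_N\le S^y_N$ for all $N$ I want to conclude $x\le y$; this is the assertion that the partial order on $\KOW$ is compatible with limits, in the sense that the relation $\le$ is closed. Concretely, $S^y_N-S^x_N\ge0$ means $S^y_N-S^x_N$ is (for each $N$, eventually along the representing sequences) a class of a variety plus possibly a completion-limit of such; passing to the limit, $y-x$ is a limit of a sequence of elements each $\ge0$, and one checks directly from the definition of $\le$ on $\KOW$ (via representing Cauchy sequences) that such a limit is again $\ge0$. The same argument applied to $z_i-y_i\ge0$ gives $y\le z$.

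I expect the only real subtlety to be the last point: arguing that $\le$ is preserved under passage to limits in $\KOW$. This is not entirely formal, since $\le$ on $\KOW$ is defined via representing Cauchy sequences with $x_i\le y_i$ eventually, and one must exhibit such representatives for $x$ and $y$ from the given data. The natural move is to use the partial sums themselves as representing Cauchy sequences for $x$ and $y$ and invoke $S^x_N\le S^y_N$ for all $N$ directly, which matches the definition of $\le$ on $\KOW$ verbatim. The Cauchy/convergence half of the argument is then genuinely routine, being just the sandwiching property applied to differences of partial sums.
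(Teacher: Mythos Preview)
Your argument is correct and follows the same route as the paper's proof: bound the tail sums $\sum_{i=m}^n y_i$ between $\sum_{i=m}^n x_i$ and $\sum_{i=m}^n z_i$, invoke the sandwiching property to get that $(S^y_N)$ is Cauchy, and for the order statement use that $S^x_N\le S^y_N\le S^z_N$ holds for every partial sum together with the definition of $\le$ on $\KOW$ via representing Cauchy sequences. The paper compresses the last step to the single phrase ``by definition, since it holds for each partial sum''; your careful unpacking of why this works---taking the partial sums themselves as the representing sequences---is exactly what that phrase means, so the subtlety you flagged is already handled by the setup and needs no extra work.
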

\begin{proof}
We have $\sum_{i=m}^nx_i\leq\sum_{i=m}^ny_i\leq\sum_{i=m}^nz_i$, and since $\sum_{i=m}^nx_i\to0$ and $\sum_{i=m}^nz_i\to0$ as $m,n\to\infty$, it follows that $\sum_{i=m}^ny_i$ is bounded from above and below by sequences strongly convergent to zero, hence it converges to zero. So $(\sum_{i=0}^Ny_i)_N$ is Cauchy, the sum is convergent. The second assertion follows by definition, since it holds for each partial sum.
\end{proof}
Note that if $x_i\leq y_i\leq z_i$ it does not follow that $y_i$ is convergent when $x_i$ and $z_i$ are.

Let $a_i\in\KOW$. In general it is not true that $\sum a_i$ is convergent if and only if $a_i\to0$, a property that holds for $\KOC$. However, some of the consequences of this are true in a special case:
\begin{lemma}
If $a_i\geq0$ for every $i$ and if $\sum_ia_i$ is convergent then every rearrangement of the sum is convergent, and to the same limit.
\end{lemma}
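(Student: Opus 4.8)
The plan is to fix a bijection $\sigma\colon\bbN\to\bbN$, set $S:=\sum_ia_i\in\KOW$ and $s_N:=\sum_{i=0}^Na_i$ (so $s_N\to S$), and to prove that the rearranged partial sums $t_M:=\sum_{j=0}^Ma_{\sigma(j)}$ satisfy $S-t_M\to0$; since $t_M-S=-(S-t_M)$ and we work in a topological ring, this yields $t_M\to S$, establishing at once that every rearrangement converges and that its sum is again $S$. By the very definition of convergence to zero, it suffices to sandwich $S-t_M$ between two sequences that are strongly convergent to zero.

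First I would treat the lower bound. Put $N(M):=\max\{\sigma(0)\dotsk\sigma(M)\}$, so that $\{\sigma(0)\dotsk\sigma(M)\}\subseteq\{0\dotsk N(M)\}$. Then $s_{N(M)}-t_M$ is a finite sum of terms $a_i\geq0$, hence $\geq0$, so $t_M\leq s_{N(M)}$; moreover $s_{N(M)}\leq S$ because the partial sums of a series with nonnegative terms are bounded above by its limit (this is the ``moreover'' clause of Lemma \ref{33}, applied to the trivial sandwich $0\le a_i\le a_i$). Consequently $0\leq S-t_M$, and the constant sequence $0$, which is strongly convergent to zero, is an admissible lower bound.

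Next, for the upper bound I would exploit that $\sigma$ is a bijection. Let $m(M)$ be the largest integer with $\{0,1\dotsk m(M)\}\subseteq\{\sigma(0)\dotsk\sigma(M)\}$; since every index eventually occurs among the $\sigma(j)$, we have $m(M)\to\infty$ as $M\to\infty$. Because $t_M-s_{m(M)}$ is once more a finite sum of nonnegative terms, $t_M\geq s_{m(M)}$, so $0\le S-t_M\le S-s_{m(M)}=R_{m(M)}$, where $R_m:=\sum_{i>m}a_i$ is the $m$-th tail. Since $s_m\to S$ we have $R_m\to0$, and $R_m\geq0$ by Lemma \ref{33} again, so one may choose a sequence $\beta_m$ strongly convergent to zero with $0\le R_m\le\beta_m$. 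Then $0\le S-t_M\le\beta_{m(M)}$, and $\beta_{m(M)}\to0$ strongly: a uniform polynomial bound $\weight_n(\beta_m)\le C\abs{n}^l+D$ valid for all $m$ persists along the subsequence $m=m(M)$, and $\dim\beta_{m(M)}\to-\infty$ because $\dim\beta_m\to-\infty$ and $m(M)\to\infty$. This sandwiches $S-t_M$ between sequences strongly convergent to zero, giving $S-t_M\to0$.

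The one point requiring care is the passage to the limit in the inequalities $s_N\le S$ and $R_m\ge0$, that is, that the order relation between two series is inherited from the order relations between their partial sums; I would deduce both from the ``moreover'' part of Lemma \ref{33}. This is also the only place where the hypothesis $a_i\geq0$ is used beyond making the finite combinatorial sums nonnegative, and without it the conclusion fails, exactly as for conditionally convergent real series.
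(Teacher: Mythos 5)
Your proof is correct and takes essentially the same route as the paper's: both exploit the nonnegativity of the $a_i$ to trap the discrepancy between original and rearranged partial sums between tails of the convergent series $\sum_ia_i$, which tend to zero by the Cauchy property. Your bookkeeping (sandwiching $S-t_M$ between $0$ and a tail $R_{m(M)}$) is a slightly more careful, one-sided version of the paper's two-sided bound $-\sum_{j=n+1}^Ma_j\leq\sum_{i\leq N_n}(b_i-a_i)\leq\sum_{j=n+1}^Ma_j$, but the underlying idea is identical.
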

\begin{proof}
If $(b_i)_{i\in\bbN}$ is a rearrangement of $(a_i)_{i\in\bbN}$ then, for some $N_n$, the elements $b_0\dotsk b_n$ are among $a_0\dotsk a_{N_n}$. Therefore $\sum_{i\leq N_n}(b_i-a_i)$ is an alternating sum of $a_j$, with $j>n$. Since every $a_i\geq0$, this sum is between $-\sum_{j=n+1}^Ma_i$ and  $\sum_{j=n+1}^Ma_i$ for some $M$. Since $\sum_{i\in\bbN}a_i$ is Cauchy, both these sums tend to $0$ as $n$ tends to $\infty$. Hence $\sum_{i\in\bbN}(b_i-a_i)=0$.
\end{proof}




For this reason, we will write $\sum_{i,j\in\bbN}a_{ij}$ to mean the sum over some unspecified enumeration of $\bbN^2$.
\begin{lemma}\label{9}
Assume that all $a_{ij}\geq0$. If the sum $\sum_{i,j\in\bbN}a_{ij}$ is convergent then the same holds for $\sum_{i\in\bbN}\sum_{j\in\bbN}a_{ij}$, and the two sums are equal.
\end{lemma}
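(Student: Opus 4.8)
The plan is to reduce the double-indexed statement to the single-index rearrangement lemma that was just proved. First I would observe that, since all $a_{ij}\geq0$, for any fixed $N$ the partial sums $\sum_{i=0}^N\sum_{j\in\bbN}a_{ij}$ make sense: each inner sum $s_i:=\sum_{j\in\bbN}a_{ij}$ is a sum of nonnegative terms dominated (in the ordering) by the convergent sum $S:=\sum_{i,j\in\bbN}a_{ij}$, so by Lemma \ref{33} applied with lower bound $0$ and upper bound a suitable tail of an enumeration of $\bbN^2$, each $s_i$ converges; and then the outer sum $\sum_i s_i$ is again nonnegative-term and bounded above by $S$, so it converges as well. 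The content is then to identify $\sum_i s_i$ with $S$.

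The key step is a squeezing argument. Fix an enumeration of $\bbN^2$ realizing $S=\sum_k a_{\sigma(k)}$. For a given $N$, choose $K_N$ large enough that all pairs $(i,j)$ with $i\leq N$ and $j\leq N$ occur among $\sigma(0)\dotsk\sigma(K_N)$. Then $\sum_{i\leq N}\sum_{j\leq N}a_{ij}\leq\sum_{k\leq K_N}a_{\sigma(k)}\leq S$, using Lemma \ref{4}-style additivity of nonnegative classes and the fact that a partial sum of a convergent nonnegative series is $\leq$ the limit (which follows from Lemma \ref{33}, comparing the partial sum sequence against the constant sequence $S$ and the sequence of partial sums). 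Letting $N\to\infty$ on the left — the inner finite sums $\sum_{j\leq N}a_{ij}$ increase to $s_i$ and the double finite sum increases to $\sum_i s_i$ — gives $\sum_i s_i\leq S$. For the reverse inequality, given $N$ pick $M_N$ so large that $\sigma(0)\dotsk\sigma(N)$ all have both coordinates $\leq M_N$; then $\sum_{k\leq N}a_{\sigma(k)}\leq\sum_{i\leq M_N}\sum_{j\leq M_N}a_{ij}\leq\sum_i s_i$, and letting $N\to\infty$ yields $S\leq\sum_i s_i$. Antisymmetry of the partial ordering then forces equality.

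The main obstacle I anticipate is not conceptual but the careful handling of limits in $\KOW$: unlike in $\bbR$, monotone bounded sequences need not converge, so I cannot simply say ``increasing and bounded, hence convergent.'' Every convergence claim has to be routed through Lemma \ref{33}, i.e.\ I must exhibit explicit lower and upper bounding sequences that converge strongly to zero for the relevant tails. Concretely, the tail $\sum_{i=m}^n s_i$ must be squeezed between $0$ and a tail $\sum_{(i,j)\in T_{m,n}}a_{ij}$ of the convergent double sum (for an appropriate finite index set $T_{m,n}\to$ ``cofinite complement'' as $m\to\infty$), and one checks that such tails of a convergent nonnegative series are themselves strongly convergent to zero because the double sum is Cauchy. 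Once this bookkeeping is set up, passing to the limit in the two displayed inequalities and invoking antisymmetry closes the argument. I would also remark that the hypothesis $a_{ij}\geq0$ is essential precisely because it is what lets Lemma \ref{33} deliver convergence of the rearranged and regrouped sums.
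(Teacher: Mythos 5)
The overall strategy (reduce to the single-index rearrangement lemma, squeeze, conclude by antisymmetry) is reasonable, but there is a genuine gap at the one point where the lemma is actually hard, namely the identification of $\lim_{N\to\infty}\sum_{i,j\leq N}a_{ij}$ (which is $S$ by the rearrangement lemma) with $\sum_{i\in\bbN}s_i$. You assert that ``the double finite sum increases to $\sum_i s_i$'', but this is exactly the interchange of limits that has to be proved: one must show that
$$\sum_{i\leq N}s_i-\sum_{i,j\leq N}a_{ij}=\sum_{i\leq N}\sum_{j>N}a_{ij}=\sum_{j>N}\sum_{i\leq N}a_{ij}$$
tends to zero. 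Your proposed bound for such error terms does not work: a \emph{finite} index set $T_{m,n}$ cannot dominate $\sum_{i=m}^{n}s_i$, which involves infinitely many of the $a_{ij}$; and if you instead take the infinite strip $\{(i,j):\,i\geq m\}$, its sum is not a tail of any enumeration of $\bbN^2$ (its complement $\{i<m\}$ is infinite), so ``because the double sum is Cauchy'' gives no control over it. Indeed, $\sum_{(i,j):i\geq m}a_{ij}\to0$ is equivalent to $\sum_{i<m}s_i\to S$, i.e.\ to the lemma itself, so that route is circular. The same problem undermines the claim in your first paragraph that $\sum_i s_i$ converges because it is ``nonnegative-term and bounded above by $S$'': in $\KOW$, Lemma \ref{33} requires termwise bounds by a series already known to converge, and you never exhibit one for the outer sum.

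The missing idea, which is the paper's key step, is a dominating series that works \emph{uniformly} in the truncation level: set $c_j:=\sum_{i\leq j}a_{ij}$ (a finite sum for each $j$). Then $\sum_{j}c_j$ converges by Lemma \ref{33} (compare termwise, under a suitable enumeration, with the convergent double sum), and for every $j>n$ one has $0\leq\sum_{i\leq n}a_{ij}\leq c_j$; hence $0\leq\sum_{j>n}\sum_{i\leq n}a_{ij}\leq\sum_{j>n}c_j$, and the right-hand side is an honest tail of a single fixed convergent series, so it tends to zero as $n\to\infty$. With this in hand the displayed error term vanishes in the limit, which simultaneously gives convergence of $\sum_{i}\sum_{j}a_{ij}$ and its equality with $S$; the two-sided squeezing and the appeal to antisymmetry in $\KOW$ (itself not established in the paper) are then unnecessary.
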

\begin{proof}
For every $i\in\bbN$, it follows from Lemma \ref{33}, and the convergence of $\sum_{i,j\in\bbN}a_{ij}$, that $\sum_{j\in\bbN}a_{ij}$ is convergent. We then have
\begin{align*}
\sum_{i\leq n}\sum_{j\in\bbN}a_{ij}-\sum_{i,j\leq n}a_{ij}&=\sum_{i\leq n}\sum_{j>n}a_{ij}\\
&=\sum_{j>n}\sum_{i\leq n}a_{ij},
\end{align*}
Now $\sum_{j\in\bbN}(\sum_{i=0}^ja_{ij})$ is convergent, because of Lemma \ref{33} and since every rearrangement of $\sum_{i,j\in\bbN}a_{ij}$ is. Hence $\lim_{n\to\infty}\sum_{j>n}(\sum_{i\leq n}a_{ij})=0$ (because if $\sum_{i\in\bbN}b_i=b$ then $\sum_{i>n}b_i=b-\sum_{i\leq n}b_i\to0$). The result follows.
\end{proof}





\begin{lemma}\label{14}
Suppose that $a_i\geq0$, $b_i\geq0$. The sum $\sum_{n\in\bbN}\sum_{i+j=n}a_ib_j$ is convergent if and only if $\sum_{i\in\bbN}a_i\sum_{i\in\bbN}b_i$ is. In this case they are equal.
\end{lemma}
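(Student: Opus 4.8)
The plan is to reduce the statement to Lemma \ref{9} applied to the doubly-indexed family $a_ib_j$. First I would observe that, since all terms are non-negative, the indexing issues are harmless: by the two preceding lemmas every rearrangement of a convergent series of non-negative terms converges to the same limit, so $\sum_{n\in\bbN}\sum_{i+j=n}a_ib_j$ is convergent if and only if $\sum_{i,j\in\bbN}a_ib_j$ is (grouping the pairs $(i,j)$ by the value of $i+j$ is one particular enumeration of $\bbN^2$, and Lemma \ref{33} handles the finite inner sums $\sum_{i+j=n}a_ib_j$).

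Next, assuming $\sum_{i,j\in\bbN}a_ib_j$ converges, I would apply Lemma \ref{9} twice: once to get $\sum_{i,j\in\bbN}a_ib_j=\sum_{i\in\bbN}\sum_{j\in\bbN}a_ib_j$, and then factor $\sum_{j\in\bbN}a_ib_j=a_i\sum_{j\in\bbN}b_j$ for each fixed $i$ (this uses that multiplication by the fixed element $a_i\in\KOW$ is continuous, so it commutes with the limit of partial sums, together with $a_i\geq 0$ and $b_j\geq0$ so the factor $\sum_j b_j$ is well-defined and the order relations are preserved). Writing $b:=\sum_{j\in\bbN}b_j$, this gives $\sum_{i,j}a_ib_j=\sum_{i\in\bbN}a_i b$. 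Now one more appeal to continuity of multiplication, this time by the fixed element $b$, lets me pull $b$ out of the sum over $i$: $\sum_{i\in\bbN}a_i b=(\sum_{i\in\bbN}a_i)\,b=\sum_{i\in\bbN}a_i\sum_{i\in\bbN}b_i$. In particular the left-hand side being convergent forces $\sum_i a_i$ and $\sum_i b_i$ to be convergent (again via Lemma \ref{33}, comparing $0\leq a_i\leq \sum_{i,j}a_ib_j$-type bounds, or more directly since a partial product of two partial sums is squeezed appropriately) and establishes the claimed equality.

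For the converse, suppose $\sum_i a_i$ and $\sum_i b_i$ both converge, say to $a$ and $b$. Then I would show directly that the partial sums $P_N:=\sum_{n\leq N}\sum_{i+j=n}a_ib_j=\sum_{i+j\leq N}a_ib_j$ form a Cauchy sequence in $\KOW$. For $M<N$ one has $0\leq P_N-P_M=\sum_{M<i+j\leq N}a_ib_j$, and this is bounded above by $\bigl(\sum_{i\leq N}a_i\bigr)\bigl(\sum_{j\leq N}b_j\bigr)-\bigl(\sum_{i\leq M/2}a_i\bigr)\bigl(\sum_{j\leq M/2}b_j\bigr)$ — a difference of products of partial sums of the two convergent series — which tends to $ab-ab=0$; the lower bound $0$ also tends to $0$ strongly. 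By the squeeze property of the topology, $P_N-P_M\to0$, so $\sum_n\sum_{i+j=n}a_ib_j$ converges, and by the first direction (or by a direct comparison of partial products) its value is $ab$.

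The main obstacle I anticipate is bookkeeping with the partial sums in the converse direction: arranging the comparison so that the tail $\sum_{M<i+j\leq N}a_ib_j$ is genuinely squeezed between two sequences \emph{strongly} convergent to zero, i.e. verifying uniform polynomial growth of the bounding sequences, which follows from Lemma \ref{31} since they are built by finitely many additions and multiplications from the Cauchy sequences defining $a$ and $b$, but must be set up carefully. The forward direction is essentially a formal consequence of Lemma \ref{9} plus continuity of multiplication by a fixed element, so it should be routine.
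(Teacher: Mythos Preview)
Your forward direction has a genuine gap. You attempt to conclude from the convergence of $\sum_{i,j}a_ib_j$ that both $\sum_i a_i$ and $\sum_j b_j$ converge individually, via ``$0\leq a_i\leq\sum_{i,j}a_ib_j$-type bounds''. This step fails: take $a_i=1$ for all $i$ and $b_j=0$ for all $j$; then every $a_ib_j=0$, so the Cauchy product converges (to $0$), yet $\sum_i a_i$ diverges in $\KOW$. The correct reading of the statement --- and the only one the paper actually proves --- is that the \emph{sequence of products of partial sums} $\bigl(\sum_{i\leq N}a_i\bigr)\bigl(\sum_{j\leq N}b_j\bigr)$ converges, not that the two factors converge separately. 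With that reading your detour through Lemma~\ref{9} and continuity of multiplication by a fixed element becomes unnecessary.

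The paper's argument is much more direct and is essentially the idea you already use for the converse. It writes the single identity
\[
\biggl(\sum_{0\leq i\leq N}a_i\biggr)\biggl(\sum_{0\leq j\leq N}b_j\biggr)-\sum_{0\leq n<N}\sum_{i+j=n}a_ib_j=\sum_{I(N)}a_ib_j,
\]
where $I(N)$ is the set of pairs with $i,j\leq N$ and $i+j\geq N$, and then squeezes the right-hand side between $0$ and the tail $\sum_{n\geq N}\sum_{i+j=n}a_ib_j$ of the Cauchy product; this handles one direction in one line. Your converse argument --- squeezing $P_N-P_M$ between $0$ and a difference of two products of partial sums --- is correct and supplies exactly the companion bound the paper leaves implicit for the other direction, so in that respect your write-up is actually more complete than the paper's.
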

\begin{proof}
$$\Biggl(\sum_{0\leq i\leq N}a_i\Biggr)\Biggl(\sum_{0\leq i\leq N}b_i\Biggr)-\sum_{0\leq n<N}\sum_{i+j=n}a_ib_j=\sum_{I(N)}a_ib_j,$$
where $I(N)$ is a set of indices $(i,j)$, all of which $i+j\geq N$. Now, since every $a_i$ and $b_i$ is $\geq0$, it follows that $0\leq\sum_{I(N)}a_ib_j\leq\sum_{n\geq N}\sum_{i+j=n}a_ib_j$ for every $N$. Since $\sum_{n\in\bbN}(\sum_{i+j=n}a_ib_j)$ is Cauchy it follows that $\sum_{n\geq N}\sum_{i+j=n}a_ib_j\to0$ as $N\to\infty$, hence $\sum_{I(N)}a_ib_j\to0$.
\end{proof}

\begin{lemma}\label{12}
If $a_i\geq0$, $b_i\geq0$, and if $\sum_{i\in\bbN}a_i$ and $\sum_{i\in\bbN}b_i$ are convergent, then $\sum_{i\in\bbN}a_ib_i$ is convergent and $\leq\sum_{i\in\bbN}a_i\sum_{i\in\bbN}b_i$.
\end{lemma}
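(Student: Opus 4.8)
The plan is to deduce this from Lemma~\ref{14} and Lemma~\ref{33} by a squeezing argument. First I would observe that since $a_i\geq0$ and $b_i\geq0$, for every $i$ we have $0\leq a_ib_i\leq\sum_{p+q=i}a_pb_q$, because the right-hand side is a sum of nonnegative terms one of which is $a_ib_i$ (taking $p=q$... more precisely the term with $p=0,q=i$ plus the term with $p=i,q=0$ overshoots, so better to pick just the single index $(p,q)$ with... ) — I will simply bound $a_ib_i$ by the full inner sum $\sum_{p+q=i}a_pb_q$, which dominates it termwise. Thus, setting $y_i:=a_ib_i$ and $z_i:=\sum_{p+q=i}a_pb_q$ and $x_i:=0$, we have $x_i\leq y_i\leq z_i$.

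Next I would invoke Lemma~\ref{14}: since $\sum_{i\in\bbN}a_i$ and $\sum_{i\in\bbN}b_i$ are convergent, the product $\sum_{i\in\bbN}a_i\sum_{i\in\bbN}b_i$ is convergent, and hence so is the Cauchy-product $\sum_{n\in\bbN}\sum_{p+q=n}a_pb_q=\sum_{n\in\bbN}z_n$, and the two are equal. Trivially $\sum_{i\in\bbN}x_i=0$ is convergent. Then Lemma~\ref{33} applies to $x_i\leq y_i\leq z_i$ and yields that $\sum_{i\in\bbN}y_i=\sum_{i\in\bbN}a_ib_i$ is convergent, with
$$0=\sum_{i\in\bbN}x_i\leq\sum_{i\in\bbN}a_ib_i\leq\sum_{i\in\bbN}z_i=\sum_{i\in\bbN}\sum_{p+q=i}a_pb_q=\sum_{i\in\bbN}a_i\sum_{i\in\bbN}b_i,$$
which is exactly the claimed inequality.

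The only point needing a little care — and the main (minor) obstacle — is the termwise domination $a_ib_i\leq\sum_{p+q=i}a_pb_q$: one must check that $a_ib_i$ really occurs among the nonnegative summands on the right. It does not literally appear as a single term (the indices satisfying $p+q=i$ are $(0,i),(1,i-1),\dots,(i,0)$, none of which is $(i,i)$ unless $i=0$); so instead I would use the cruder but valid bound obtained by noting $\sum_{p+q=i}a_pb_q\geq a_0b_i+a_ib_0\geq0$ is not what I want either. The clean fix is: it suffices to find \emph{any} convergent nonnegative majorant. Take $z_i:=a_i\bigl(\sum_{j\in\bbN}b_j\bigr)$; since $a_i\geq0$ and $\sum_j b_j\geq0$ (as a convergent sum of nonnegatives, by Lemma~\ref{33}), we have $0\leq a_ib_i\leq a_i\sum_{j\in\bbN}b_j=z_i$, and $\sum_{i\in\bbN}z_i=\bigl(\sum_{i\in\bbN}a_i\bigr)\bigl(\sum_{j\in\bbN}b_j\bigr)$ converges by continuity of multiplication. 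Lemma~\ref{33} with $0\leq a_ib_i\leq z_i$ then gives convergence of $\sum a_ib_i$ and the bound $\sum_{i\in\bbN}a_ib_i\leq\sum_{i\in\bbN}a_i\sum_{i\in\bbN}b_i$ in one stroke, and Lemma~\ref{14} is not even needed. I would present this second, cleaner route.
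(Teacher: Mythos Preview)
Your second route is correct: with $S:=\sum_{j}b_j\in\KOW$ one has $S\geq0$ and $S-b_i\geq0$ (the latter because the partial sums $\sum_{j\leq N,\,j\neq i}b_j$ are $\geq0$, and the ordering on $\KOW$ is defined via representing Cauchy sequences), hence $0\leq a_ib_i\leq a_iS$; continuity of multiplication gives $\sum_i a_iS=(\sum_i a_i)S$, and Lemma~\ref{33} finishes. Just be explicit that the step $a_ib_i\leq a_iS$ uses both $b_i\leq S$ and the fact that a product of two nonnegative elements is nonnegative; your parenthetical only justifies $a_iS\geq0$, not the inequality itself.

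It is worth knowing that the paper's own argument is precisely your \emph{first}, abandoned idea, with the index mismatch repaired: rather than trying to bound $a_ib_i$ by the $i$th Cauchy--product term, the paper bounds it by the $(2i)$th term $\sum_{p+q=2i}a_pb_q$, which genuinely contains $a_ib_i$ as a summand, and inserts $0$ at odd indices (using $0\leq\sum_{p+q=2i+1}a_pb_q$). Then Lemma~\ref{33} is applied to this interlaced sequence against the full Cauchy product, whose convergence comes from Lemma~\ref{14}. So the obstacle you flagged is real, and the paper's fix is an index shift rather than a change of majorant. Your alternative majorant $a_iS$ is arguably cleaner: it avoids Lemma~\ref{14} entirely and uses only continuity of multiplication, at the cost of invoking that the ordering is compatible with products (which is immediate from the definitions but not stated in the paper).
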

\begin{proof}
Lemma \ref{14} shows that $\sum_{n\in\bbN}(\sum_{i+j=n}a_ib_j)$ is convergent. Since $a_nb_n\leq\sum_{i+j=2n}a_ib_j$, and $0\leq\sum_{i+j=2n+1}a_ib_j$ it hence follows from Lemma \ref{33} that $\sum_{i\in\bbN}a_ib_i$ is convergent, and less than or equal to $\sum_{n\in\bbN}(\sum_{i+j=n}a_ib_j)$.
\end{proof}

\section{Definition of the motivic measure}\label{23}
Before we start, let us remark that all the definitions in this section are the standard ones, used in the appendix of \cite{MR1905024}, in \cite{MR1886763} and in \cite{MR2075915}. (There are some minor differences between these expositions; in these cases we have followed the path of \cite{MR1886763}. In particular, our arc spaces will be sets, rather than schemes or formal schemes.) However, since we use a stronger topology, the proofs that everything works is slightly different, even though they are based on the corresponding existing proofs. In the end of the section we also prove that we now have the property that we want, that we may specialize to $p$-adic integration via point counting.

Given a complete discrete valuation ring, let $\SDVR$ be its spectrum and $\f$ its residue field. Given a scheme $\SSDVR$ over $\SDVR$, of finite type and of pure relative dimension $d$, define $\SSDVR_n$ to be the $n$th Greenberg variety over $\f$. In the mixed characteristic case, $\SSDVR_n$ is characterized by the property that if $R$ is a $\f$-algebra then $\SSDVR_n(R)=\SSDVR\bigl(\W_{n}(R)\bigr)$, where $\W$ is the Witt vectors. In the equal characteristic case, $\SSDVR_n$ is characterized by the property that if $R$ is a $\f$-algebra then $\SSDVR_n(R)=\SSDVR(R[T]/(t^{n})\bigr)$. Fix an algebraic closure $\overline{\f}$ of $\f$ and define $\SSDVR_\infty$ to be the projective limit of the sets $\SSDVR_n(\overline{\f})$. For  $\f\subset K\subset\overline{\f}$, define $\SSDVR_\infty(K)$ to be $\ili\SSDVR_n(K)\subset\SSDVR_\infty$. We are going to define the measure of certain subsets of $\SSDVR_\infty$.

We say that a subset $S\subset\SSDVR_n(\overline{\f})$ is \emph{constructible} if there are a finite number of locally closed subschemes $V_i\subset\SSDVR_n$ such that $S=\bigcup_iV_i(\overline{\f})$, where the $V_i$ are mutually disjoint.

If $A\subset\SSDVR_\infty$, then it is a \emph{cylinder} if the following holds: For some $n$, $\pi_n(A)$ is constructible in $\SSDVR_n(\overline{\f})$, and defined over $\f$, and $A=\pi_n^{-1}\pi_n(A)$. In this case there are subschemes $V_i\subset\SSDVR_n$ such that $\pi_n(A)=\bigcup_i V_i(\overline{\f})$; we define $[\pi_n(A)]:=\sum_i[V_i]\in\KOR$.

If, in addition, for all $m\geq n$, the projection $\pi_{m+1}(A)\to\pi_m(A)$ is a piecewise trivial fibration, with fiber an affine space of dimension $d$, then we say that $A$ is a \emph{stable cylinder}, or just that it is \emph{stable}. (Every cylinder is stable in case $\SSDVR$ is smooth.)

If $A$ is stable we see that $\dim\pi_m(A)-md$ is independent of the choice of $m\geq n$, define the \emph{dimension} of $A$ to be this number.

We also see that $\tilde{\mu}_{\SSDVR}(A):=[\pi_m(A)]\bbL^{-md}\in\KOL$ is independent of the choice of $m\geq n$. This is an additive measure on the set of stable subsets. We want to define a measure on a bigger collection of subsets of $\SSDVR_\infty$. We will use the standard construction, except that we let the measure take values in $\KOW$ instead of $\KOC$. So let $\mu_{\SSDVR}(A)$, the measure of $A$, be the image of $\tilde{\mu}_{\SSDVR}(A)$ in $\KOW$. Note that $\dim\mu_\SSDVR(A)=\dim A$.

\begin{example}
The case that we are particularly interested in is when $\SDVR=\spec\bbZp$ (so $\f=\bbFp$) and $\SSDVR=\bbA^d_{\bbZp}$. Then
$$\SSDVR_\infty=\lim_{\longleftarrow}\bbA_{\bbZp}^d\bigl(\W_n(\overline{\f})\bigr)=\W(\overline{\f})^d.$$
We see that $\SSDVR_\infty(\f)=\bbZp^d$. By the standard construction of the (normalized) Haar measure on $\bbZp^d$ we see that if $A$ is stable then $\mu_{Haar}(A\cap\bbZp^d)=\counting_p(\mu_{\SSDVR}(A))$ (\emph{cf}. \cite{MR1997948}, Lemma 4.6.2). Below we define a concept of measurability of subsets of $\SSDVR_\infty$, for general $\SSDVR$, in such a way that in the special case when $\SSDVR=\bbA^d_{\bbZp}$, if $A\subset\SSDVR_\infty$ is measurable then $\mu_{Haar}(A\cap\bbZp^d)=\counting_p(\mu_{\SSDVR}(A))$.
\end{example}

We sum up the above observations in the following lemma:
\begin{lemma}\label{1}
The set of stable subsets of $\SSDVR_\infty$ form a Boolean ring
on which $\mu_\SSDVR$ is additive. Moreover, if $\SSDVR=\bbA^d_{\bbZp}$ and $A\subset\SSDVR_\infty$ is stable, then $\mu_{Haar}(A\cap\bbZp^d)=\counting_p(\mu_{\SSDVR}(A))$.
\end{lemma}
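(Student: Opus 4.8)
The plan is to prove the two assertions separately, each being essentially a matter of unwinding the definitions recorded just above together with the behaviour of $\counting_p$. For the Boolean ring statement I would first record two elementary facts about cylinders: a cylinder stable from level $n$ is automatically stable from every level $m\ge n$, and $A=\pi_m^{-1}\pi_m(A)$ holds for all $m\ge n$ (if $\pi_m(b)=\pi_m(a)$ with $a\in A$, then projecting further down gives $\pi_n(b)=\pi_n(a)\in\pi_n(A)$, so $b\in A$). Given two stable cylinders $A,B$, stable from levels $n_A,n_B$, I would put $N=\max\{n_A,n_B\}$ and work entirely at level $N$: there $\pi_N(A),\pi_N(B)\subset\SSDVR_N(\overline{\f})$ are constructible and defined over $\f$, and
$$A\cup B=\pi_N^{-1}\bigl(\pi_N(A)\cup\pi_N(B)\bigr),\qquad A\cap B=\pi_N^{-1}\bigl(\pi_N(A)\cap\pi_N(B)\bigr),\qquad A\setminus B=\pi_N^{-1}\bigl(\pi_N(A)\setminus\pi_N(B)\bigr),$$
with the bases on the right again constructible and $\f$-rational; so these three sets are cylinders defined at level $N$, and $\emptyset$ is a stable cylinder of measure $0$. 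Since closure under $\cup$, $\cap$ and $\setminus$ entails closure under symmetric difference, this gives the ring structure once stability is checked.

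Checking that $A\cup B$, $A\cap B$ and $A\setminus B$ are again \emph{stable} is the step I expect to require the most care. The key observation is that, for $x\in\pi_m(A)$, the fibre of $\pi_{m+1}(A)\to\pi_m(A)$ over $x$ is exactly $\{\,y\in\SSDVR_{m+1}(\overline{\f}):y\mapsto x\text{ and }y\text{ lifts to an arc in }\SSDVR_\infty\,\}$, which depends only on $x$ and $\SSDVR$, not on the cylinder $A$; in particular the $A$- and $B$-fibrations agree over $\pi_m(A)\cap\pi_m(B)$. I would then decompose $\pi_N(A)\cup\pi_N(B)$ into the disjoint locally closed pieces $\pi_N(A)\setminus\pi_N(B)$, $\pi_N(A)\cap\pi_N(B)$, $\pi_N(B)\setminus\pi_N(A)$, and treat $\cap$ and $\setminus$ by the analogous simpler decompositions (using also that $\pi_N(A\cap B)=\pi_N(A)\cap\pi_N(B)$ and $\pi_N(A\setminus B)=\pi_N(A)\setminus\pi_N(B)$, again by the cylinder property); over each piece, at every level $m\ge N$, the relevant projection is the restriction of the $A$- or the $B$-fibration to a locally closed subset of its base, hence still a piecewise trivial fibration with fibre $\bbA^d$, and gluing gives stability. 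For additivity, if moreover $A\cap B=\emptyset$ then $\pi_N(A)\cap\pi_N(B)=\emptyset$ and $\pi_N(A\cup B)=\pi_N(A)\sqcup\pi_N(B)$; writing each of $\pi_N(A)$, $\pi_N(B)$ as a disjoint union of locally closed subschemes gives $[\pi_N(A\cup B)]=[\pi_N(A)]+[\pi_N(B)]$ in $\KOR$, whence $\mu_\SSDVR(A\cup B)=\mu_\SSDVR(A)+\mu_\SSDVR(B)$ after dividing by $\bbL^{Nd}$ and passing to $\KOW$.

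For the second assertion, with $\SSDVR=\bbA^d_{\bbZp}$ one has $\SSDVR_m(\overline{\f})=\W_m(\overline{\f})^d$ and $\SSDVR_m(\f)=\W_m(\bbFp)^d=(\bbZ/p^m)^d$. Given a stable $A$, I would fix $m$ large enough that $A=\pi_m^{-1}\pi_m(A)$ with $\pi_m(A)=\bigsqcup_iV_i(\overline{\f})$, the $V_i$ locally closed in $\SSDVR_m$, defined over $\f$ and pairwise disjoint, so that $\mu_\SSDVR(A)=[\pi_m(A)]\bbL^{-md}$ with $[\pi_m(A)]=\sum_i[V_i]$. Since $\counting_p(\bbL)=p$ and $\counting_p([V_i])=\abs{V_i(\f)}$, and the sets $V_i(\f)$ partition $\pi_m(A)\cap(\bbZ/p^m)^d$, this gives $\counting_p(\mu_\SSDVR(A))=p^{-md}\bigl|\pi_m(A)\cap(\bbZ/p^m)^d\bigr|$. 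On the other hand $A\cap\bbZp^d=A\cap\SSDVR_\infty(\f)$ is the preimage, under reduction modulo $p^m$ from $\bbZp^d$ to $(\bbZ/p^m)^d$, of the finite set $\pi_m(A)\cap(\bbZ/p^m)^d$, and the preimage of each single point has normalized Haar measure $p^{-md}$; hence $\muH(A\cap\bbZp^d)=p^{-md}\bigl|\pi_m(A)\cap(\bbZ/p^m)^d\bigr|=\counting_p(\mu_\SSDVR(A))$, as asserted (this is \cite{MR1997948}, Lemma 4.6.2).
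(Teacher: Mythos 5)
Your proof is correct and follows the same route the paper intends: the lemma is stated there as a summary of the preceding observations (with the Haar-measure identity delegated to \cite{MR1997948}, Lemma 4.6.2), so no separate proof is given, and your write-up simply supplies the routine details. The two points you rightly flag as needing care — that the level-$(m{+}1)$ fibre over $x\in\pi_m(A)$ depends only on $x$ and not on the cylinder, and that $\pi_N(A)\cap\pi_N(B)=\emptyset$ for disjoint stable $A,B$ — are both handled correctly.
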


We next extend the motivic measure to a bigger collection of subsets: We want to do this in a way similar to that of the ordinary Haar measure. The problem is that the standard construction involves taking $\sup$ or $\inf$, which we cannot do in $\KOW$. We therefore use the same method as is used in \cite{MR1886763}, except that we use $\KOW$ instead of $\KOC$: 
\begin{definition}\label{32}
The subset $A\subset\SSDVR_\infty$ is \emph{measurable} if the following holds:
\begin{itemize}
\item For every positive integer $m$, there exists a stable subset $A_m\subset\SSDVR_\infty$ and a sequence of stable subsets $(C_i^m\subset\SSDVR_\infty)_{i=1}^\infty$, with $\sum_i\mu_{\SSDVR}(C_i^m)$ convergent, such that $A\symdiff A_m\subset\bigcup_i C_i^m$.

\item $\lim_{m\to\infty}\sum_i\mu_{\SSDVR}(C_i^m)=0$.
\end{itemize}
The measure of $A$ is then defined to be $\mu_{\SSDVR}(A):=\lim_{m\to\infty}\mu_{\SSDVR}(A_m)$.
\end{definition}
\begin{remark}
This definition works well when $\SSDVR$ is smooth, since in that case all cylinders are stable. In the general case, it is probably better to first define the measure of an arbitrary cylinder $A$ as $\lim_{e\to\infty}\mu_{\SSDVR}(\SSDVR_\infty^{(e)}\cap A)\in\KOW$, where $\SSDVR_\infty^{(e)}=\SSDVR_\infty\setminus\pi_e^{-1}\bigl((\SSDVR_{sing})_e\bigr)$, and then replace ``stable sets'' with ``cylinders'' in the definition of a measurable set. However, since we are not able to prove that this limit exists in general, we use the present definition, which in any case works for our purposes.
\end{remark}

To prove that this is well-defined we will use the following well known lemma (\emph{cf.} Lemma $3.5.1$ in  \cite{MR1997948}).
\begin{lemma}\label{3}
Let  $A\subset\SSDVR_\infty$ be stable. Then every countable covering of $A$ with stable subsets $C_i$ has a finite subcovering.
\end{lemma}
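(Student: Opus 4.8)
The statement to prove is Lemma \ref{3}: a stable set $A \subset \SSDVR_\infty$, covered by countably many stable sets $C_i$, admits a finite subcovering. The plan is to reduce everything to a fixed finite level $\SSDVR_n$ and then invoke quasi-compactness of constructible sets there. First I would pick $n$ large enough that $A$ is stable at level $n$ (so $A = \pi_n^{-1}\pi_n(A)$ with $\pi_n(A)$ constructible and defined over $\f$). The difficulty is that each $C_i$ is stable at its own level $n_i$, and there is no uniform bound on the $n_i$; so one cannot naively push the whole covering down to level $n$.

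The key step is to use the topology: since $A$ is stable, the sequence $\sum_{i} \mu_\SSDVR(C_i)$ — or rather the statement that the $C_i$ cover $A$ — should be analyzed via an exhaustion argument. Concretely, I would argue by contradiction. Suppose no finite subfamily covers $A$. Build a decreasing sequence of nonempty closed subsets $A \setminus (C_1 \cup \dots \cup C_N)$ of $A$. Each $C_1 \cup \dots \cup C_N$ is again a stable cylinder (finite unions of stable sets are stable, by Lemma \ref{1}), so at a suitable level $m_N \geq n$ its projection $\pi_{m_N}(C_1 \cup \dots \cup C_N)$ is constructible in $\SSDVR_{m_N}$. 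Then $\pi_{m_N}(A) \setminus \pi_{m_N}(C_1 \cup \dots \cup C_N)$ is a nonempty constructible subset of $\SSDVR_{m_N}(\overline{\f})$, and pulling back we get nonempty cylinders whose intersection over all $N$ we must show is nonempty. This is where one uses that $\overline{\f}$ is algebraically closed together with a compactness/Mittag-Leffler argument on the inverse system $(\SSDVR_m(\overline{\f}))_m$: a projective limit of nonempty constructible sets (with constructible transition maps that are, along a stable cylinder, surjective onto affine-space fibers) is nonempty. Producing an element $x$ in the intersection gives $x \in A$ with $x \notin C_i$ for all $i$, contradicting that the $C_i$ cover $A$.

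The main obstacle I expect is precisely the last point: showing that the nested intersection $\bigcap_N \bigl(A \setminus (C_1 \cup \dots \cup C_N)\bigr)$ is nonempty. One must be careful that the levels $m_N$ are increasing and that the transition maps $\pi_{m_{N+1}} \to \pi_{m_N}$ restricted to the relevant constructible sets remain surjective (this uses the stable-cylinder hypothesis on $A$: the fibers are affine spaces over $\overline{\f}$, hence nonempty, and removing constructible subsets $\pi_{m_N}(C_1 \cup \dots \cup C_N)$ can only fail surjectivity if some fiber is entirely removed — but that would mean a whole cylinder over a point of $\pi_{m_N}(A)$ is covered by finitely many $C_i$, which one handles by noting each such point-fiber-cylinder is stable and then... actually here I would instead argue more directly). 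The cleanest route: since this is a "well known" lemma cited from \cite{MR1997948}, the model proof there already does this bookkeeping; I would follow it, taking care only that nothing in the argument depended on the particular topology of $\KOC$ versus $\KOW$ — and indeed Lemma \ref{3} is purely set-theoretic/geometric, with no reference to the value ring of the measure, so the existing proof transfers verbatim.
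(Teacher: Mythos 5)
First, note that the paper does not actually prove Lemma \ref{3}: it is quoted as well known, with a pointer to Lemma 3.5.1 of \cite{MR1997948}, precisely because (as you observe at the end) the statement is purely set-theoretic and does not depend on which completion of $\KOL$ the measure takes values in. So your closing move --- defer to the reference --- is consistent with what the paper itself does, and your overall strategy (pass to the complements $D_N=A\setminus(C_1\cup\dots\cup C_N)$, which are stable by Lemma \ref{1}, and derive a contradiction from the nonemptiness of $\bigcap_N D_N$ when no finite subfamily covers $A$) is the standard one.

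However, the sketch you give of the decisive step has a genuine gap, and the specific justification you offer would fail. The issue is not the surjectivity of the transition maps $\pi_{m+1}(D_N)\to\pi_m(D_N)$ for fixed $N$ --- that is automatic from stability and only shows that each individual $D_N$ is nonempty. The issue is the intersection over $N$ at a fixed level $m$: you need $\bigcap_N\pi_m(D_N)$ to be nonempty, where $(\pi_m(D_N))_N$ is a decreasing sequence of nonempty constructible subsets of $\SSDVR_m(\overline{\f})$. Since $\overline{\f}$ may be countable (the case of interest is $\overline{\bb{F}}_p$), such an intersection of closed points can perfectly well be empty: enumerate $\overline{\bb{F}}_p=\{a_1,a_2,\dots\}$ and consider $\bbA^1(\overline{\bb{F}}_p)\setminus\{a_1,\dots,a_N\}$. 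So ``a projective limit of nonempty constructible sets is nonempty'' is false at the level of $\overline{\f}$-points, and algebraic closedness does not rescue it. The actual argument (as in the cited reference, following Denef--Loeser and Batyrev) treats the $\pi_m(D_N)$ as constructible subsets of the scheme $\SSDVR_m$, including non-closed points: a nonempty constructible set contains the generic points of its closure, the closures $\overline{\pi_m(D_N)}$ stabilize as $N\to\infty$ by Noetherianity (equivalently, one uses quasi-compactness of the constructible topology), whence $\bigcap_N\pi_m(D_N)\neq\emptyset$ as a subset of the scheme; one must then run a second limiting argument over $m$ and finally use the piecewise-trivial $\bbA^d$-fibration structure of stable sets to convert the resulting compatible system of scheme points into an actual element of $\SSDVR_\infty=\lim_{\longleftarrow}\SSDVR_n(\overline{\f})$. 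None of this bookkeeping appears in your sketch, and your own text trails off exactly at this point, so as written the argument is incomplete at its only nontrivial step.
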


\begin{proposition}\label{2}
The notion of measureability of subsets of $\SSDVR_\infty$ is well-defined. The measurable subsets form a Boolean ring on which $\mu_\SSDVR$ is additive.
\end{proposition}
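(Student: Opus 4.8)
The plan is to follow the standard pattern for showing a premeasure extends to a measure, but working in $\KOW$ with the order structure doing the job that $\sup$/$\inf$ would do classically. I will break the proof into three parts: (1) well-definedness of $\mu_\SSDVR(A)$, (2) closure of the measurable sets under the Boolean operations, and (3) additivity.

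For well-definedness, suppose $A$ has two approximating data $(A_m,(C_i^m))$ and $(A_m',(C_i^{\prime m}))$. I want to show $\lim_m\mu_\SSDVR(A_m)=\lim_m\mu_\SSDVR(A_m')$. First one should check each limit exists: the sequence $(\mu_\SSDVR(A_m))_m$ is Cauchy because $A_m\symdiff A_{m'}\subset(A\symdiff A_m)\cup(A\symdiff A_{m'})\subset\bigcup_iC_i^m\cup\bigcup_iC_i^{m'}$, so $A_m\symdiff A_{m'}$ is a stable set covered by countably many stable sets; by Lemma \ref{3} it has a \emph{finite} subcover, and then Lemma \ref{4} together with additivity on stable sets (Lemma \ref{1}) gives $0\le\mu_\SSDVR(A_m\symdiff A_{m'})\le\sum_i\mu_\SSDVR(C_i^m)+\sum_i\mu_\SSDVR(C_i^{m'})$, and since $\mu_\SSDVR(A_m)-\mu_\SSDVR(A_{m'})$ differs from $\pm\mu_\SSDVR(A_m\symdiff A_{m'})$ by terms bounded the same way (writing $\mu(A_m)-\mu(A_{m'})=\mu(A_m\setminus A_{m'})-\mu(A_{m'}\setminus A_m)$ and bounding each piece), the right-hand side tends to $0$ by hypothesis; hence $\mu_\SSDVR(A_m)-\mu_\SSDVR(A_{m'})\to0$, i.e. the sequence is Cauchy and converges in $\KOW$ (completeness). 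Comparing the two data sets: $A_m\symdiff A_m'\subset\bigcup_iC_i^m\cup\bigcup_iC_i^{\prime m}$, so by the same Lemma \ref{3} plus Lemma \ref{4} argument $\mu_\SSDVR(A_m)-\mu_\SSDVR(A_m')$ is squeezed between $\pm(\sum_i\mu_\SSDVR(C_i^m)+\sum_i\mu_\SSDVR(C_i^{\prime m}))$, which tends to $0$; so the two limits agree.

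For the Boolean ring structure, the key observation is symmetric-difference algebra: $(A\cup B)\symdiff(A_m\cup B_m)\subset(A\symdiff A_m)\cup(B\symdiff B_m)$ and similarly $(A\cap B)\symdiff(A_m\cap B_m)\subset(A\symdiff A_m)\cup(B\symdiff B_m)$ and $(A\setminus B)\symdiff(A_m\setminus B_m)\subset(A\symdiff A_m)\cup(B\symdiff B_m)$. So given approximating data for $A$ and $B$, I take $A_m\cup B_m$ (resp. $A_m\cap B_m$, $A_m\setminus B_m$) as the stable approximant — these are stable by Lemma \ref{1} — and the covering family $(C_i^m)\cup(C_i^{\prime m})$ (re-indexed over $\bbN$ by a fixed bijection $\bbN\cong\bbN\disjunion\bbN$); convergence of $\sum$ over this merged family, and that its sum tends to $0$, follows since it is $\le\sum_i\mu_\SSDVR(C_i^m)+\sum_i\mu_\SSDVR(C_i^{\prime m})$ using Lemma \ref{33} (or just that a sum of two convergent nonnegative series is convergent). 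This shows $A\cup B$, $A\cap B$, $A\setminus B$ are all measurable, so the measurable sets form a Boolean ring.

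For additivity, suppose $A,B$ are measurable and disjoint; I want $\mu_\SSDVR(A\cup B)=\mu_\SSDVR(A)+\mu_\SSDVR(B)$. Using $A_m\cup B_m$ as the approximant for $A\cup B$, it suffices to show $\mu_\SSDVR(A_m\cup B_m)-\mu_\SSDVR(A_m)-\mu_\SSDVR(B_m)\to0$. Now $\mu_\SSDVR(A_m\cup B_m)=\mu_\SSDVR(A_m)+\mu_\SSDVR(B_m)-\mu_\SSDVR(A_m\cap B_m)$ by additivity on stable sets, so it reduces to $\mu_\SSDVR(A_m\cap B_m)\to0$. Since $A\cap B=\emptyset$, the set $A_m\cap B_m$ is contained in $(A\symdiff A_m)\cup(B\symdiff B_m)\subset\bigcup_iC_i^m\cup\bigcup_iC_i^{\prime m}$; it is stable, so by Lemma \ref{3} it has a finite subcover, and by Lemma \ref{4} $0\le\mu_\SSDVR(A_m\cap B_m)\le\sum_i\mu_\SSDVR(C_i^m)+\sum_i\mu_\SSDVR(C_i^{\prime m})\to0$, giving the claim by the squeeze property of the topology. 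The main obstacle throughout is disciplined bookkeeping: one must consistently replace $\le$-bounds on measures of stable sets by finite subcovers (Lemma \ref{3}) before invoking Lemma \ref{4}, and handle the merging of two index sets and two approximating sequences so that the "$\sum\to0$" condition is preserved; the order-theoretic squeeze (if $x_i\le y_i\le z_i$ with $x_i,z_i\to0$ then $y_i\to0$) is what lets all these estimates close without ever taking suprema.
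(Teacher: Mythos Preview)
Your proof is correct and, for the well-definedness part, follows essentially the same route as the paper: bound $\mu_\SSDVR(A_m)-\mu_\SSDVR(A_{m'})$ above and below by $\pm\mu_\SSDVR(A_m\symdiff A_{m'})$, cover $A_m\symdiff A_{m'}$ by the $C_i^m\cup C_i^{m'}$, pass to a finite subcover via Lemma~\ref{3}, and invoke Lemma~\ref{4} together with the order-theoretic squeeze. The paper's written proof in fact stops after well-definedness and does not spell out the Boolean-ring closure or additivity; your arguments for those parts (using the inclusions $(A\star B)\symdiff(A_m\star B_m)\subset(A\symdiff A_m)\cup(B\symdiff B_m)$ for $\star\in\{\cup,\cap,\setminus\}$, and the observation $A_m\cap B_m\subset(A\symdiff A_m)\cup(B\symdiff B_m)$ when $A\cap B=\emptyset$) are the standard ones and go through without difficulty in this setting.
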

\begin{proof}
We first prove that the limit exists. For this we have to prove that the sequence $(\mu_\SSDVR(A_m))_{m=1}^{\infty}$ is Cauchy. Since $A_m$ and $A_{m'}$ are stable, $\mu_\SSDVR(A_m\symdiff A_{m'})=\mu_\SSDVR(A_m)-\mu_\SSDVR(A_{m'})+2\mu_\SSDVR(A_{m'}\setminus A_m)$. Moreover, $\mu_\SSDVR(A_{m'}\setminus A_m)\geq0$ so it follows that $\mu_\SSDVR(A_m)-\mu_\SSDVR(A_{m'})\leq\mu_\SSDVR(A_m\symdiff A_{m'})$. Next, $A_m\symdiff A_{m'}\subset\bigcup_i C_i^m\cup C_i^{m'}$. By Lemma \ref{3} a finite number of the $C_i^m\cup C_i^{m'}$ suffices, hence the right hand side is stable so $\mu_\SSDVR(A_m\symdiff A_{m'})\leq\mu_\SSDVR(\bigcup_{i=0}^N C_i^m\cup C_i^{m'})$. From Lemma \ref{4} we then see that $\mu_\SSDVR(A_m\symdiff A_{m'})\leq\sum_{i=0}^N\bigl(\mu_\SSDVR(C_i^m)+\mu_\SSDVR(C_i^{m'})\bigr)$. Hence
$$\mu_\SSDVR(A_m)-\mu_\SSDVR(A_{m'})\leq\sum_{i=0}^N\bigl(\mu_\SSDVR(C_i^m)+\mu_\SSDVR(C_i^{m'})\bigr).$$
Similarly, $-\sum_{i=0}^N\bigl(\mu_\SSDVR(C_i^m)+\mu_\SSDVR(C_i^{m'})\bigr)\leq\mu_\SSDVR(A_m)-\mu_\SSDVR(A_{m'})$. Since, by assumption, $\sum_{i=0}^N\mu_\SSDVR(C_i^m)$ converges to zero as $m$ tends to infinity, it follows that $\mu_\SSDVR(A_m)-\mu_\SSDVR(A_{m'})$ is bounded from above and below by sequences strongly convergent to zero.

Next, suppose that $A\symdiff B_m\subset\bigcup_iD_i^m$ is another sequence that defines $\mu_\SSDVR(A)$. In the same way as above we see that for some $N$, $-\sum_{i=0}^N\bigl(\mu_\SSDVR(C_i^m)+\mu_\SSDVR(D_i^m)\bigr)\leq\mu_\SSDVR(A_m)-\mu_\SSDVR(B_m)\leq\sum_{i=0}^N\bigl(\mu_\SSDVR(C_i^m)+\mu_\SSDVR(D_i^m)\bigr)$, hence $\mu_\SSDVR(A_m)-\mu_\SSDVR(B_m)\to0$ as $m\to\infty$.
\end{proof}

The following proposition is sometimes useful when proving that a set is measurable.
\begin{proposition}\label{5}
Let $A=\bigcup_{i\in\bbN}A_i$, where the $A_i$ are stable and the sum $\sum_{i\in\bbN}\mu_\SSDVR(A_i)$ is convergent. Then
$A$ is measurable and $\mu_\SSDVR(A)=\lim_{n\to\infty}\mu_\SSDVR(\bigcup_{i\leq n}A_i)$. If furthermore the $A_i$ are pairwise disjoint, then $\mu_\SSDVR(A)=\sum_{i\in\bbN}\mu_\SSDVR(A_i)$.
\end{proposition}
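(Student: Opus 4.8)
The plan is to verify the two conditions of Definition \ref{32} directly with the most natural choices, and then read off the two formulas. For each positive integer $m$ I would take $A_m := \bigcup_{i\le m}A_i$; since the stable subsets form a Boolean ring (Lemma \ref{1}) this finite union is again stable. As the covering sequence I would take $C_i^m := A_{m+i}$ for $i\ge 1$: each is stable, and since $A_m\subset A$ we have $A\symdiff A_m = A\setminus A_m\subset\bigcup_{i>m}A_i=\bigcup_{i\ge 1}C_i^m$, so the containment required in the definition holds.

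It then remains to understand the tail series $\sum_i\mu_\SSDVR(C_i^m)=\sum_{i>m}\mu_\SSDVR(A_i)$. Because $\sum_{i\in\bbN}\mu_\SSDVR(A_i)$ is convergent it is Cauchy, so its partial sums $s_n:=\sum_{i\le n}\mu_\SSDVR(A_i)$ satisfy $s_n-s_{n'}\to0$; hence for fixed $m$ the partial sums $s_n-s_m$ of the tail form a Cauchy sequence, so the tail converges, and since $\KOW$ is a topological ring its value is $s-s_m$ where $s:=\lim_n s_n$. As $s_m\to s$ we conclude $\sum_{i>m}\mu_\SSDVR(A_i)=s-s_m\to0$ as $m\to\infty$ --- the same manipulation with Cauchy sequences already used in the proof of Lemma \ref{9}. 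This gives both bullet points of Definition \ref{32}, so $A$ is measurable, and by that definition $\mu_\SSDVR(A)=\lim_{m\to\infty}\mu_\SSDVR(A_m)=\lim_{n\to\infty}\mu_\SSDVR(\bigcup_{i\le n}A_i)$.

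For the last assertion I would invoke additivity of $\mu_\SSDVR$ on stable sets (Lemma \ref{1}): when the $A_i$ are pairwise disjoint, $\mu_\SSDVR(\bigcup_{i\le n}A_i)=\sum_{i\le n}\mu_\SSDVR(A_i)$, and letting $n\to\infty$ the right-hand side converges, by hypothesis, to $\sum_{i\in\bbN}\mu_\SSDVR(A_i)$, which therefore equals $\mu_\SSDVR(A)$. There is no genuine obstacle here once Definition \ref{32} and Proposition \ref{2} are available; the only point deserving care is the treatment of the tail series $\sum_{i>m}\mu_\SSDVR(A_i)$ --- both its convergence for each fixed $m$ and its vanishing as $m\to\infty$ --- since $\KOW$ is not metrizable and convergence is the sandwiched notion of Subsection \ref{38}, but this reduces to the elementary Cauchy-sequence bookkeeping already present in Section \ref{22}.
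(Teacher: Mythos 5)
Your proof is correct and follows essentially the same route as the paper: the same choice of approximating sets $A_m=\bigcup_{i\le m}A_i$ and covering sets $C_i^m=A_{m+i}$, the same Cauchy-sequence argument that the tail $\sum_{i>m}\mu_\SSDVR(A_i)$ converges and tends to zero, and the same passage to the limit via finite additivity in the disjoint case. (If anything you are slightly more careful than the paper in writing $A\symdiff A_m\subset\bigcup_{i>m}A_i$ as a containment rather than an equality, which is the correct statement when the $A_i$ overlap.)
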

\begin{proof}
We have $A\symdiff\bigcup_{i\leq n}A_i=\bigcup_{i>n}A_i$. Here $\sum_{i>n}\mu_\SSDVR(A_i)$ is convergent since $\sum_{i\in\bbN}\mu_\SSDVR(A_i)$ is, and it then follows that $\lim_{n\to\infty}\sum_{i>n}\mu_\SSDVR(A_i)=\lim_{n\to\infty}\bigl(\sum_{i\in\bbN}\mu_\SSDVR(A_i)-\sum_{i\leq n}\mu_\SSDVR(A_i)\bigr)=\sum_{i\in\bbN}\mu_\SSDVR(A_i)-\lim_{n\to\infty}\sum_{i\leq n}\mu_\SSDVR(A_i)=0$. Therefore, by definition, $A$ is measurable and $\mu_\SSDVR(A)=\lim_{n\to\infty}\mu_\SSDVR(\bigcup_{i\leq n}A_i)$. In the case when the $A_i$ are disjoint we have $\mu_\SSDVR(\bigcup_{i\leq n}A_i)=\sum_{i\leq n}\mu_\SSDVR(A_i)$, hence $\mu_\SSDVR(A)=\sum_{i\in\bbN}\mu_\SSDVR(A_i)$.
\end{proof}
We will use the following lemma to apply the proposition.
\begin{lemma}\label{29}
Suppose that $A_i\subset B_i\subset\SSDVR_\infty$ are stable. If $\sum_{i\in\bbN}\mu_\SSDVR(B_i)$ is convergent, then so is $\sum_{i\in\bbN}\mu_\SSDVR(A_i)$.
\end{lemma}
\begin{proof}
For some $n$, both $A_i$ and $B_i$ are stable of level $n$. We have $\pi_n(A_i)\subset\pi_n(B_i)$, hence by Lemma \ref{4}, $[\pi_n(A_i)]\leq[\pi_n(B_i)]$. Therefore $0\leq\mu_\SSDVR(A_i)\leq\mu_\SSDVR(B_i)$, it follows from Lemma \ref{33} that the sum is convergent.
\end{proof}

The following proposition shows that this definition generalizes the $p$-adic measure.
\begin{proposition}
Let $\SSDVR=\bbA_{\bbZp}^d$. When $A\subset\SSDVR_\infty$ is measurable we have $\mu_{Haar}(A\cap\bbZp^d)=\counting_p(\mu_{\SSDVR}(A))$.
\end{proposition}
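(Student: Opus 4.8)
The plan is to reduce the statement for a general measurable set $A$ to the already-established case of stable sets (Lemma \ref{1}), using the continuity of $\counting_p$ (the Definition-Lemma) together with the countable additivity of the Haar measure $\muH$ on $\bbZp^d$. First I would unwind Definition \ref{32}: choose stable sets $A_m$ and stable sets $C_i^m$ with $\sum_i\mu_\SSDVR(C_i^m)$ convergent, $A\symdiff A_m\subset\bigcup_iC_i^m$, and $\lim_m\sum_i\mu_\SSDVR(C_i^m)=0$, so that $\mu_\SSDVR(A)=\lim_m\mu_\SSDVR(A_m)$. Applying the continuous homomorphism $\counting_p\colon\KOW\to\bb R$ gives $\counting_p\mu_\SSDVR(A)=\lim_m\counting_p\mu_\SSDVR(A_m)$, and by Lemma \ref{1} each term equals $\muH(A_m\cap\bbZp^d)$. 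So the task becomes: show $\muH(A_m\cap\bbZp^d)\to\muH(A\cap\bbZp^d)$.

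For that I would control the symmetric difference on the $\bbZp$-points. From $A\symdiff A_m\subset\bigcup_iC_i^m$ we get $(A\cap\bbZp^d)\symdiff(A_m\cap\bbZp^d)\subset\bigcup_i(C_i^m\cap\bbZp^d)$, hence
$$\absbb{\muH(A\cap\bbZp^d)-\muH(A_m\cap\bbZp^d)}\leq\muH\Bigl(\bigcup_i(C_i^m\cap\bbZp^d)\Bigr)\leq\sum_i\muH(C_i^m\cap\bbZp^d)=\sum_i\counting_p\mu_\SSDVR(C_i^m),$$
where the last equality is again Lemma \ref{1} applied to each stable $C_i^m$, and the interchange of $\counting_p$ with the (convergent) infinite sum is justified by continuity of $\counting_p$ together with Lemma \ref{33}/Proposition \ref{5} applied to the partial sums $\bigcup_{i\leq n}C_i^m$ (whose measures increase to $\sum_i\mu_\SSDVR(C_i^m)$). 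Since $\sum_i\mu_\SSDVR(C_i^m)\to0$ in $\KOW$ and $\counting_p$ is continuous, the right-hand side tends to $0$ as $m\to\infty$. This forces $\muH(A_m\cap\bbZp^d)\to\muH(A\cap\bbZp^d)$, and combining with the first paragraph yields $\counting_p\mu_\SSDVR(A)=\muH(A\cap\bbZp^d)$.

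Two points need a little care. One must check that $A\cap\bbZp^d$ is genuinely $\muH$-measurable: this follows because $A_m\cap\bbZp^d$ is constructible-over-$\bbFp$ in a Greenberg variety hence $\muH$-measurable, and the symmetric-difference bound above shows $A\cap\bbZp^d$ is approximated in $\muH$-measure by these sets, so it lies in the completion of the Borel $\sigma$-algebra, on which $\muH$ is defined. Second, one must be slightly careful that ``$\sum_i\counting_p\mu_\SSDVR(C_i^m)$'' means the same thing on both sides — on the motivic side it is the limit of partial sums in $\KOW$ pushed through $\counting_p$, and on the Haar side it is the honest nonnegative real series $\sum_i\muH(C_i^m\cap\bbZp^d)$; these agree termwise by Lemma \ref{1} and the former converges, so by continuity of $\counting_p$ the latter converges to the same value. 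I expect the main obstacle to be precisely this bookkeeping around commuting $\counting_p$ with the infinite sums and confirming $\muH$-measurability of $A\cap\bbZp^d$; the limiting argument itself is then routine.
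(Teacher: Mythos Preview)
Your proposal is correct and follows essentially the same route as the paper: pass $\counting_p$ through the limit defining $\mu_\SSDVR(A)$ by continuity, then bound $\muH\bigl((A\symdiff A_m)\cap\bbZp^d\bigr)$ by $\sum_i\muH(C_i^m\cap\bbZp^d)=\sum_i\counting_p\mu_\SSDVR(C_i^m)$ and use continuity of $\counting_p$ again to see this tends to zero. The extra care you take with $\muH$-measurability of $A\cap\bbZp^d$ and with the meaning of the infinite sums is more than the paper spells out (it just invokes ``standard measure theory'' and ``using two times the fact that $\counting_p$ is continuous''), but the argument is the same.
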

\begin{proof}
By definition, we have that $\counting_p\mu_{\SSDVR}(A)=\counting_p\lim_{m\to\infty}\mu_{\SSDVR}(A_m)$. Since $\counting_p$ is continuous, this equals $\lim_{m\to\infty}\counting_p\mu_{\SSDVR}(A_m)=\lim_{m\to\infty}\muH(A_m\cap\bbZp^d)$. If we now can show that $\muH\bigl((A\symdiff A_m)\cap\bbZp^d\bigr)\rightarrow0$ when $m\to\infty$, then by standard measure theory it follows that $\lim_{m\to\infty}\muH(A_m\cap\bbZp^d)=\muH(A\cap\bbZp^d)$, and we are done.

Since $(A\symdiff A_m)\cap\bbZp^d\subset(\bigcup_i C_i^m)\cap\bbZp^d$ it suffices to show that $\lim_{m\to\infty}\sum_i\muH(C_i^m\cap\bbZp^d)=0$. Now this equals $\lim_{m\to\infty}\sum_i\counting_p\mu_\SSDVR(C_i^m)$. Using two times the fact that $\counting_p$ is continuous, together with the assumption on the $C_i$ gives that this equals $\counting_p\lim_{m\to\infty}\sum_i\mu_\SSDVR(C_i^m)=\counting_p0=0$
\end{proof}

We next define the integrals of functions $\SSDVR_\infty\to\KOW$.
\begin{definition}
Let the function $f\colon\SSDVR_\infty\to\KOW$ have measurable fibers and the property that the sum $\sum_{a\in\KOW}\mu_\SSDVR(f^{-1}(a))a$ is convergent, (in particular it has only countably many nonzero terms). We then say that $f$ is integrable and we define $\int fd\mu_\SSDVR$ to be the above limit.
\end{definition}

If $A\subset\SSDVR_\infty$, let $\chi_A$ be the characteristic function of $A$ and define $\int_Afd\mu_\SSDVR:=\int f\cdot\chi_Ad\mu_\SSDVR$. If $\int_Afd\mu_\SSDVR$ and $\int_Bfd\mu_\SSDVR$ exists and $A$ and $B$ are disjoint then $\int_{A\cup B}fd\mu_\SSDVR$ exists and is equal to $\int_Afd\mu_\SSDVR+\int_Bfd\mu_\SSDVR$ (By the definition, since $\lim x_n+\lim y_n=\lim(x_n+y_n)$ holds by definition of addition in ring of Cauchy sequences.)

\begin{proposition}\label{16}
Let $\SSDVR=\bbA_{\bbZp}^d$. If $A\subset\SSDVR_\infty$ is measurable and if $f\colon\SSDVR_\infty\to\KOW$ is integrable, then
$$\counting_p\int_A fd\mu_\SSDVR=\int_{A\cap\bbZp^d}\counting_p\circ f d\muH.$$
\end{proposition}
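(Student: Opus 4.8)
The plan is to reduce the statement to the already-established special case of characteristic functions, namely the Proposition that $\counting_p\mu_\SSDVR(B)=\muH(B\cap\bbZp^d)$ for measurable $B$, and then to push $\counting_p$ through the defining limit of the integral, using continuity of $\counting_p$ and the fact (recorded earlier) that $\counting_p$ respects the partial ordering, hence bounded-by-strongly-convergent sequences. First I would write $f=\sum_{a\in\KOW}a\chi_{f^{-1}(a)}$, so that by definition $\int_A f\,d\mu_\SSDVR=\sum_{a\in\KOW}a\,\mu_\SSDVR(f^{-1}(a)\cap A)$, a convergent sum in $\KOW$ with countably many nonzero terms. Applying $\counting_p$, which is a continuous ring homomorphism $\KOW\to\bb R$, and using that $\counting_p$ commutes with convergent sums (which follows by the same squeezing argument as in the proof that $\counting_p$ is continuous: a convergent series is a limit of partial sums, and $\counting_p$ is continuous), gives
$$\counting_p\int_A f\,d\mu_\SSDVR=\sum_{a\in\KOW}\counting_p(a)\cdot\counting_p\bigl(\mu_\SSDVR(f^{-1}(a)\cap A)\bigr).$$
Now invoke the Proposition on characteristic functions: since $f^{-1}(a)\cap A$ is measurable, $\counting_p\mu_\SSDVR(f^{-1}(a)\cap A)=\muH\bigl(f^{-1}(a)\cap A\cap\bbZp^d\bigr)$.

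Next I would interpret the right-hand side as the $p$-adic integral. The function $\counting_p\circ f\colon\SSDVR_\infty\to\bb R$ takes the value $\counting_p(a)$ on $f^{-1}(a)$, so on $A\cap\bbZp^d$ it takes the value $\counting_p(a)$ on the set $f^{-1}(a)\cap A\cap\bbZp^d$. Thus
$$\sum_{a\in\KOW}\counting_p(a)\,\muH\bigl(f^{-1}(a)\cap A\cap\bbZp^d\bigr)=\int_{A\cap\bbZp^d}\counting_p\circ f\,d\muH,$$
provided the $p$-adic integral on the right converges absolutely, i.e. the function $\counting_p\circ f$ is $\muH$-integrable on $A\cap\bbZp^d$. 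This is where I expect the main obstacle: one must check that the sum $\sum_a\abs{\counting_p(a)}\muH(f^{-1}(a)\cap A\cap\bbZp^d)$ is finite, and also that distinct values $a\ne a'$ with $\counting_p(a)=\counting_p(a')$ are correctly collected together. The latter is harmless — one simply groups terms by the value in $\bb R$. For the former, I would argue as follows: absolute convergence of $\sum_a a\,\mu_\SSDVR(f^{-1}(a)\cap A)$ in $\KOW$ in the sense used in the paper does not literally give a bound on $\sum_a\abs{\counting_p(a)}\muH(\cdots)$ for free, so I would instead exploit that for a $p$-adically measurable function taking countably many values, integrability over a finite-measure set is equivalent to absolute convergence of $\sum_a\abs{\counting_p(a)}\muH(f^{-1}(a)\cap A\cap\bbZp^d)$, and derive this from the convergence hypothesis on $f$ by applying $\counting_p$ to suitable partial sums and using that $\counting_p$ is order-preserving (so dominates in absolute value by a Cauchy, hence bounded, real sequence). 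Concretely, enumerating the nonzero values as $a_1,a_2,\dots$, the partial sums $\sum_{k\le N}\abs{a_k}\mu_\SSDVR(f^{-1}(a_k)\cap A)$ need not be increasing in $\KOW$, but after applying $\counting_q$ for varying $q$ one controls the real partial sums; the cleanest route is to observe that finiteness of the $p$-adic integral only requires boundedness of the increasing real partial sums $\sum_{k\le N}\abs{\counting_p(a_k)}\muH(f^{-1}(a_k)\cap A\cap\bbZp^d)$, and each such equals $\counting_p$ applied to $\sum_{k\le N}\abs{a_k}\mu_\SSDVR(f^{-1}(a_k)\cap A)$ when all the relevant $a_k$ happen to be $\geq 0$; the general case follows by splitting into the finitely-many-at-a-time contributions or by a routine reduction, which I would not grind through here.

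Finally I would assemble the chain of equalities: $\counting_p\int_A f\,d\mu_\SSDVR=\sum_a\counting_p(a)\muH(f^{-1}(a)\cap A\cap\bbZp^d)=\int_{A\cap\bbZp^d}\counting_p\circ f\,d\muH$, which is the claim. The two nontrivial inputs are the continuity of $\counting_p$ together with its compatibility with convergent sums (a $\KOW$-side statement, following from the squeeze lemma for the ordering), and the translation of "$\KOW$-convergence of $\sum a\,\mu_\SSDVR(f^{-1}(a)\cap A)$" into "$\muH$-integrability of $\counting_p\circ f$" (a measure-theory-side statement). I expect the write-up to spend most of its length on the second point, treating the positivity/ordering bookkeeping carefully, while the first point is essentially a one-line appeal to continuity already proved in the Definition-Lemma on $\counting_q$.
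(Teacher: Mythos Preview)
Your approach is essentially the same as the paper's: expand the motivic integral as the sum $\sum_{a}\mu_\SSDVR(f^{-1}(a)\cap A)\,a$, push $\counting_p$ through by continuity, apply the earlier proposition $\counting_p\mu_\SSDVR(B)=\muH(B\cap\bbZp^d)$ termwise, group terms by the real value $r=\counting_p(a)$, and recognize the result as $\int_{A\cap\bbZp^d}\counting_p\circ f\,d\muH$. The paper presents this in four displayed lines and calls it ``straight forward''.

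The difference is one of emphasis. You devote most of your proposal to the question of whether $\counting_p\circ f$ is $\muH$-integrable on $A\cap\bbZp^d$, i.e.\ whether the real sum $\sum_a\lvert\counting_p(a)\rvert\,\muH(f^{-1}(a)\cap A\cap\bbZp^d)$ is finite. The paper simply does not address this: it passes directly from the sum over $r\in\bb{R}$ to the Haar integral without comment. Your concern is legitimate---convergence of $\sum_a a\,\mu_\SSDVR(f^{-1}(a)\cap A)$ in $\KOW$ does not obviously yield absolute convergence after applying $\counting_p$, and your sketch for handling it (splitting by sign, using that $\counting_p$ is order-preserving) is reasonable but, as you acknowledge, not carried to completion. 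So you have identified a genuine subtlety the paper glosses over; you have not introduced any gap that the paper avoids, but you also have not fully closed the one you found.
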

\begin{proof}
As we have set things up, this is straight forward:
\begin{align*}
\counting_p\int_Afd\mu_\SSDVR=&\counting_p\sum_{a\in\KOW}\mu_\SSDVR(f^{-1}a\cap A)a\\
=&\sum_{a\in\KOW}\muH(f^{-1}a\cap A\cap\bbZp^d)\counting_pa\\
=&\sum_{r\in\bb{R}}\muH\bigl((\counting_p\circ f)^{-1}r\cap A\cap\bbZp^d\bigr)r\\
=&\int_{A\cap\bbZp^d}\counting_p\circ f d\muH.  \qedhere
\end{align*}
\end{proof}

We illustrate this with an example: 
\begin{example}\label{43}
Let $\SSDVR=\bbA^1_{\bbZp}$. Define the function $\abs{\cdot}\colon\SSDVR_\infty\to\KOWp$ as $x\mapsto\bbL^{-\ord x}$. In a forthcoming paper \cite{RokaeusComputations} we will prove that $$\int_{\SSDVR_\infty}\abs{X^2+1}d\mu_\SSDVR=1-[\spec\bbFp[X]/(X^2+1)]\tfrac{1}{\bbL+1}\in\KOWp.$$
This integral has the property that for every power of $p$, $\counting_q\int_{\SSDVR_\infty}\abs{X^2+1}d\mu_\SSDVR=\int_{\W(\bbFq)}\abs{X^2+1}_pdX$. So by computing the motivic integral we have simultaneously computed the corresponding integral over $\W(\bbFq)$, for every power of $p$. In this example, if $p\equiv 1\pmod{4}$ then $-1$ is a square in $\bbFp$ and hence $\bbFp[X]/(X^2+1)=\bbFp^2$. So  $\int_{\SSDVR_\infty}\abs{X^2+1}d\mu_\SSDVR=1-\tfrac{2}{\bbL+1}$, showing that for every power of $p$ we have $\int_{\W(\bbF_{q})}\abs{X^2+1}_pdX=(q-1)/(q+1)$.

If instead $p\equiv 3\pmod{4}$ then $-1$ is a non-square in $\bbFq$ and $\bbFp[X]/(X^2+1)=\bb{F}_{p^2}$. So $\int_{\SSDVR_\infty}\abs{X^2+1}d\mu_\SSDVR=1-[\spec\bbF_{p^2}]\tfrac{1}{\bbL+1}$, showing that
$$\int_{\W(\bbF_{q})}\abs{X^2+1}_pdX=\begin{cases}
                                      1 &\text{ if $q=p^{2m+1}$}\\
                                      (q-1)/(q+1) &\text{ if $q=p^{2m}$}
                                     \end{cases}
$$
\end{example}

\bibliography{MF-bibliography}
\bibliographystyle{amsalpha}

\end{document}